\documentclass[letter,reqno,10pt,twoside]{amsart}
\usepackage{amsmath,amsthm,amsfonts,amssymb,euscript,mathrsfs,graphics,color,latexsym,marginnote}
\usepackage[dvips]{graphicx}
\usepackage[margin=1in]{geometry}
\usepackage{hyperref}
\usepackage[toc,page]{appendix}
\usepackage{relsize}
\usepackage[shortlabels]{enumitem}
\usepackage[toc,page]{appendix}
\usepackage{float}

\usepackage[dvipsnames]{xcolor}

\usepackage{hyperref}
\hypersetup{colorlinks=true, pdfstartview=FitV,linkcolor=blue!70!black,citecolor=red!70!black, urlcolor=green!60!black}
\definecolor{labelkey}{rgb}{0.6,0,0} 

\usepackage{pgfplots}

\setlength{\parskip}{0.2em}

\newtheorem{theorem}{Theorem}[section]
\newtheorem{lemma}[theorem]{Lemma}
\newtheorem{proposition}[theorem]{Proposition}

\newtheorem{remark}[theorem]{Remark}

\makeatletter
\renewcommand \theequation {%
\ifnum \c@section>\z@ \@arabic\c@section.%
\fi\@arabic\c@equation} \@addtoreset{equation}{section}
\makeatother

\DeclareMathOperator*{\esssup}{ess\,sup}

\providecommand{\abs}[1]{\left\vert#1\right\vert}
\providecommand{\babs}[1]{\big\vert#1\big\vert}
\providecommand{\nm}[1]{\left\Vert#1\right\Vert}
\providecommand{\nnm}[1]{\left\vert\kern-0.25ex\left\vert\kern-0.25ex\left\vert#1\right\vert\kern-0.25ex\right\vert\kern-0.25ex\right\vert}

\providecommand{\br}[1]{\left\langle #1 \right\rangle}
\providecommand{\bbr}[1]{\Big\langle #1 \Big\rangle}
\providecommand{\brx}[1]{\left\langle #1 \right\rangle_x}

\providecommand{\brw}[1]{\left\langle #1 \right\rangle_w}

\providecommand{\tnm}[1]{\left\Vert#1\right\Vert_{L^2}}
\providecommand{\lnm}[1]{\left\Vert#1\right\Vert_{L^{\infty}}}
\providecommand{\tnms}[2]{\left\vert#1\right\vert_{L^2_{#2}}}
\providecommand{\lnms}[2]{\left\vert#1\right\vert_{L^{\infty}_{#2}}}

\def\ud{\mathrm{d}}

\def\p{\partial}
\def\ls{\lesssim}
\def\gs{\gtrsim}

\def\rt{\rightarrow}
\def\r{\mathbb{R}}
\def\no{\nonumber}
\def\ue{\mathrm{e}}

\def\ds{\displaystyle}

\def\S{\mathbb{S}}

\def\nx{\nabla_x}

\def\s{\S}
\def\e{\varepsilon}
\def\d{\delta}
\def\vw{w}
\def\vx{x}
\def\vn{n}

\def\u{U}
\def\bu{\overline{\u}}
\def\uu{U^B}

\def\re{R}
\def\bre{\overline{\re}}
\def\ire{\re-\bre}
\def\ss{S}
\def\g{h}

\def\vr{r}
\def\vt{\varsigma}
\def\kk{\kappa}

\def\bl{\Phi}
\def\bbl{\overline{\bl}}
\def\bll{\Psi}
\def\bbll{\overline{\bll}}
\def\ch{\widetilde{\chi}}

\def\oo{o(1)}

\def\test{\xi}
\def\vr{\mathbf{r}}
\def\mn{\mu}
\def\vww{\mathfrak{w}}
\def\pl{L}
\def\ua{u_a}
\def\weak{\mathfrak{g}}

\def\N{r}
\def\NN{s}

\begin{document}

\title{$L^2$ Diffusive Expansion For Neutron Transport Equation}

\author[Y. Guo]{Yan Guo}
\address[Y. Guo]{
   \newline\indent Division of Applied Mathematics, Brown University}
\email{yan\_guo@brown.edu}
\thanks{Y. Guo was supported by NSF Grant DMS-2106650.}

\author[L. Wu]{Lei Wu}
\address[L. Wu]{
   \newline\indent Department of Mathematics, Lehigh University}
\email{lew218@lehigh.edu}
\thanks{L. Wu was supported by NSF Grant DMS-2104775.}

\date{}

\subjclass[2020]{Primary 35Q49, 82D75; Secondary 35Q62, 35Q20}

\keywords{non-convex domains, transport equation, diffusive limit}

\maketitle

\begin{abstract}
Grazing set singularity leads to a surprising
counter-example and breakdown \cite{AA003} of the classical mathematical theory for $L^{\infty}$ diffusive expansion \eqref{final 15} of neutron transport equation with in-flow boundary condition in term of the Knudsen number $\e$, one of the
most classical problems in the kinetic theory. Even though a satisfactory new theory has been established by constructing new boundary layers with favorable $\e$-geometric correction for convex domains \cite{AA003, AA007, AA009, AA014, AA016}, the severe grazing singularity from non-convex domains has prevented any positive mathematical progress. We develop a novel and optimal $L^2$ expansion theory for general domain (including non-convex domain) by discovering a surprising $\e^{\frac{1}{2}}$ gain for the average of remainder.
\end{abstract}

\pagestyle{myheadings} \thispagestyle{plain} \markboth{Y. GUO, L. WU}{DIFFUSIVE EXPANSION OF NEUTRON TRANSPORT EQUATION}

\setcounter{tocdepth}{1}
\tableofcontents

\section{Introduction}

\subsection{Problem Formulation}

We consider the steady neutron transport equation in a three-dimensional $C^3$ bounded domain (convex or non-convex) with in-flow boundary condition. In the spatial domain $\Omega\ni\vx=(x_1,x_2,x_3)$ and the velocity domain
$\s^2\ni\vw=(w_1,w_2,w_3)$, the neutron density $u^{\e}(\vx,\vw)$
satisfies
\begin{align}\label{transport}
\left\{
\begin{array}{l}\displaystyle
\vw\cdot\nabla_x u^{\e}+\e^{-1}\Big(u^{\e}-\overline{u^{\e}}\Big)=0\ \ \text{in}\ \ \Omega\times\s^2,\\\rule{0ex}{1.2em}
u^{\e}(\vx_0,\vw)=g(\vx_0,\vw)\ \ \text{for}\
\ \vw\cdot\vn<0\ \ \text{and}\ \ \vx_0\in\p\Omega,
\end{array}
\right.
\end{align}
where $g$ is a given function denoting the in-flow data,
\begin{align}\label{average}
\overline{u^{\e}}(\vx):=\frac{1}{4\pi}\int_{\s^2}u^{\e}(\vx,\vw)\ud{\vw},
\end{align}
$\vn$ is the outward unit normal vector, with the Knudsen number $0<\e\ll1$. We intend to study the asymptotic behavior of $u^{\e}$ as $\e\rt0$.

Based on the flow direction, we can divide the boundary $\gamma:=\big\{(\vx_0,\vw):\ \vx_0\in\p\Omega,\vw\in\s^2\big\}$ into the incoming boundary $\gamma_-$, the outgoing boundary $\gamma_+$, and the grazing set $\gamma_0$ based on the sign of $\vw\cdot\vn(\vx_0)$. In particular, the boundary condition of \eqref{transport} is only given on $\gamma_{-}$.

\subsection{Normal Chart near Boundary}\label{sec:geometric-setup}

We follow the approach in \cite{AA009, AA016} to define the geometric quantities, and the details can be found in Section \ref{sec:boundary-layer}. For smooth manifold $\p\Omega$, there exists an orthogonal curvilinear coordinates system $(\iota_1,\iota_2)$ such that the coordinate lines coincide with the principal directions at any $\vx_0\in\p\Omega$.
Assume $\p\Omega$ is parameterized by $\vr=\vr(\iota_1,\iota_2)$. Let the vector length be $\pl_i:=\abs{\p_{\iota_i}\vr}$ and unit vector $\vt_i:=\pl_i^{-1}\p_{\iota_i}\vr$ for $i=1,2$.

Consider the corresponding new coordinate system $(\mn, \iota_1,\iota_2)$, where $\mn$ denotes the normal distance to the boundary surface $\p\Omega$, i.e.
\begin{align}
\vx=\vr-\mn\vn.
\end{align}
Define the orthogonal velocity substitution for $\vww:=(\varphi,\psi)$ as
\begin{align}\label{velocity}
-\vw\cdot\vn=\sin\varphi,\quad
\vw\cdot\vt_1=\cos\varphi\sin\psi,\quad
\vw\cdot\vt_2=\cos\varphi\cos\psi.
\end{align}
Finally, we define the scaled normal variable $\eta=\dfrac{\mn}{\e}$, which implies $\dfrac{\p}{\p\mn}=\dfrac{1}{\e}\dfrac{\p}{\p\eta}$.

\subsection{Asymptotic Expansion and Remainder Equation}

We seek a solution to \eqref{transport} in the form 
\begin{align}\label{expand}
    u^{\e}=&\u+\uu+\re=\left(\u_0+\e\u_1+\e^2\u_2\right)+\uu_0+\re,
\end{align}
where the interior solution is
\begin{align}\label{expand 1}
\u(x,w):= \u_0(x,w)+\e\u_1(x,w)+\e^2\u_2(x,w),
\end{align}
and the boundary layer is
\begin{align}\label{expand 2}
\uu(\eta,\iota_1,\iota_2,\vww):= \uu_0(\eta,\iota_1,\iota_2,\vww).
\end{align}
Here $\u_0$, $\u_1$, $\u_2$ and $\uu_0$ are constructed in Section \ref{sec:interior} and Section \ref{sec:boundary-layer}, and $\re(x,v)$ is the remainder.

\subsection{Literature}

The study of the neutron transport equation in bounded domains, has attracted a lot of attention since the dawn of the atomic age. Besides its significance in nuclear sciences and medical imaging, neutron transport equation is usually regarded as a linear prototype of the more important yet more complicated nonlinear Boltzmann equation, and thus, is an ideal starting point to develop new theories and techniques. We refer to
\cite{Larsen1974=, Larsen1974, Larsen1975, Larsen1977, Larsen.D'Arruda1976, Larsen.Habetler1973, Larsen.Keller1974, Larsen.Zweifel1974, Larsen.Zweifel1976} for the formal expansion with respect to $\e$ and explicit solution. The discussion on bounded domain and half-space cases can be found in \cite{Bensoussan.Lions.Papanicolaou1979, Bardos.Santos.Sentis1984, Bardos.Phung2017, Bardos.Golse.Perthame1987, Bardos.Golse.Perthame.Sentis1988, Li.Lu.Sun2015, Li.Lu.Sun2015==, Li.Lu.Sun2017}.

The classical boundary layer of neutron transport equation dictates that $\uu_0(\eta,\iota_1,\iota_2,\vww)$ satisfies the Milne problem
\begin{align}\label{final 12}
    \sin\varphi\frac{\p \uu_0}{\p\eta}+\uu_0-\overline{\uu_0}=0.
\end{align}
From the formal expansion in $\e$ (see \eqref{expand 6}),  it is natural to expect
the remainder estimate \cite{Bensoussan.Lions.Papanicolaou1979}
\begin{align}\label{final 15}
    \lnm{\re}\ls\e.
\end{align}
Even though this is valid for domains with flat boundary, a counter-example is constructed \cite{AA003} so that \eqref{final 15} is invalid for a 2D disk. This is due to the grazing set singularity.

To be more specific, in order to show the remainder estimates \eqref{final 15}, the higher-order boundary layer expansion $\uu_1\in L^{\infty}$ is necessary, which further requires $\p_{\iota_i}\uu_0\in L^{\infty}$. Nevertheless, though $\uu_0\in L^{\infty}$, it is shown that the normal derivative $\p_{\eta}\uu_0$ is singular at the grazing set $\varphi=0$. Furthermore, this singularity $\p_{\eta}\uu_0\notin L^{\infty}$ will be transferred to $\p_{\iota_i}\uu_0\notin L^{\infty}$. A careful construction of boundary data \cite{AA003} justifies this invalidity, i.e. both the method and result of the boundary layer \eqref{final 12} are problematic.

A new construction of boundary layer \cite{AA003} based on the $\e$-Milne problem with geometric correction for $\widetilde{\uu_0}(\eta,\iota_1,\iota_2,\vww)$
\begin{align}\label{final 13}
    \sin\varphi\frac{\p \widetilde{\uu_0}}{\p\eta}-\frac{\e}{1-\e\eta}\cos\varphi\frac{\p \widetilde{\uu_0}}{\p\varphi}+\widetilde{\uu_0}-\overline{\widetilde{\uu_0}}=0
\end{align}
has been shown to provide the satisfactory characterization of the $L^{\infty}$ diffusive expansion in 2D disk domains. With more detailed regularity analysis and boundary layer decomposition techniques for \eqref{final 13}, such result has been generalized to 2D/3D smooth convex domains \cite{AA007,AA009,AA014,AA016} and even 2D annulus domain \cite{AA006}.   

In non-convex domains, the boundary layer with geometric correction is essentially 
\begin{align}\label{final 14}
    \sin\varphi\frac{\p \widetilde{\uu_0}}{\p\eta}-\frac{\e}{1+\e\eta}\cos\varphi\frac{\p \widetilde{\uu_0}}{\p\varphi}+\widetilde{\uu_0}-\overline{\widetilde{\uu_0}}=0
\end{align}
Compared to \eqref{final 13}, this sign flipping dramatically changes the characteristics.
\begin{figure}[H]
\begin{minipage}[t]{0.5\linewidth}
\centering
\includegraphics[width=2.5in]{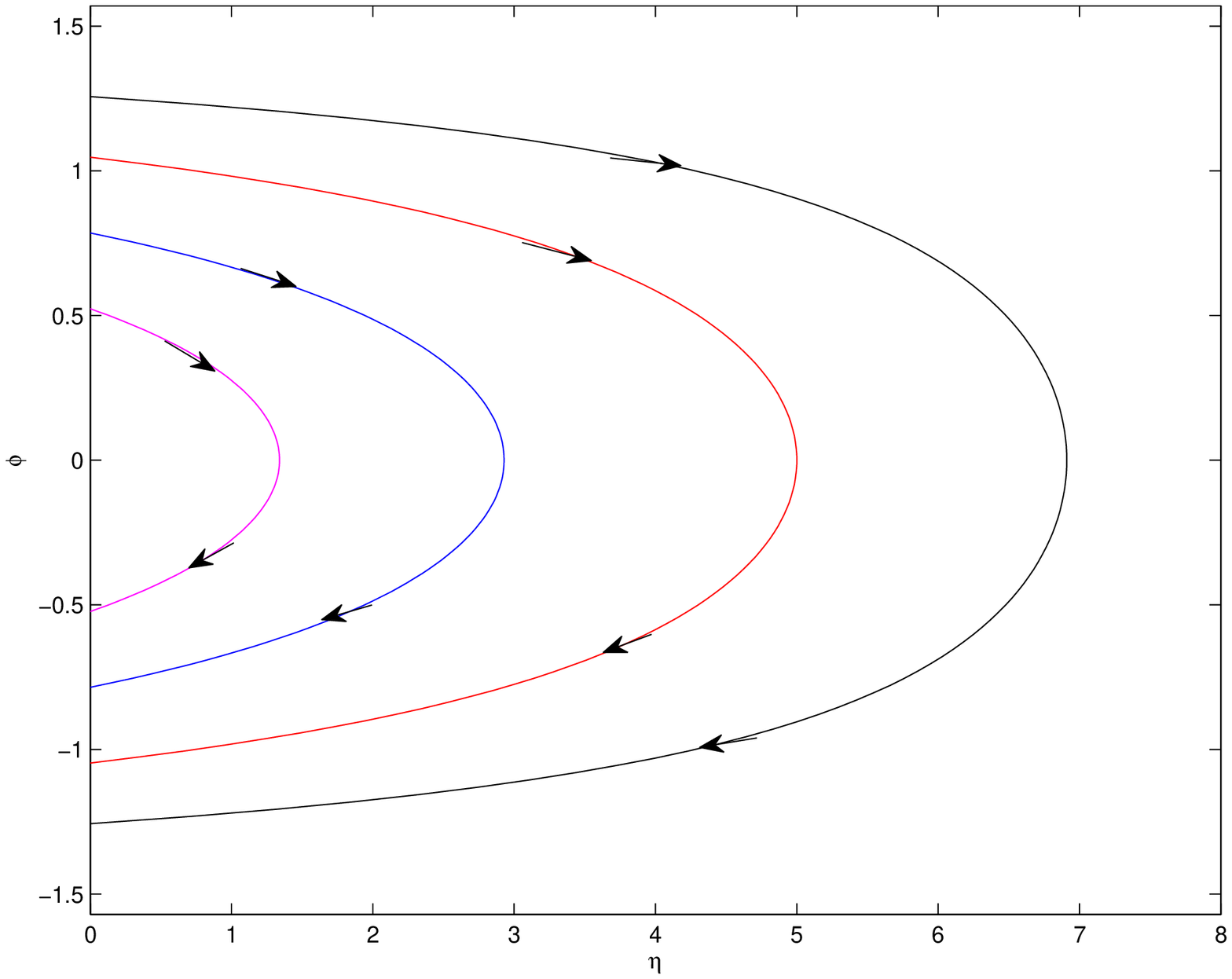}
\caption{Characteristics in Convex Domains} \label{fig 1}
\end{minipage}%
\begin{minipage}[t]{0.5\linewidth}
\centering
\includegraphics[width=2.5in]{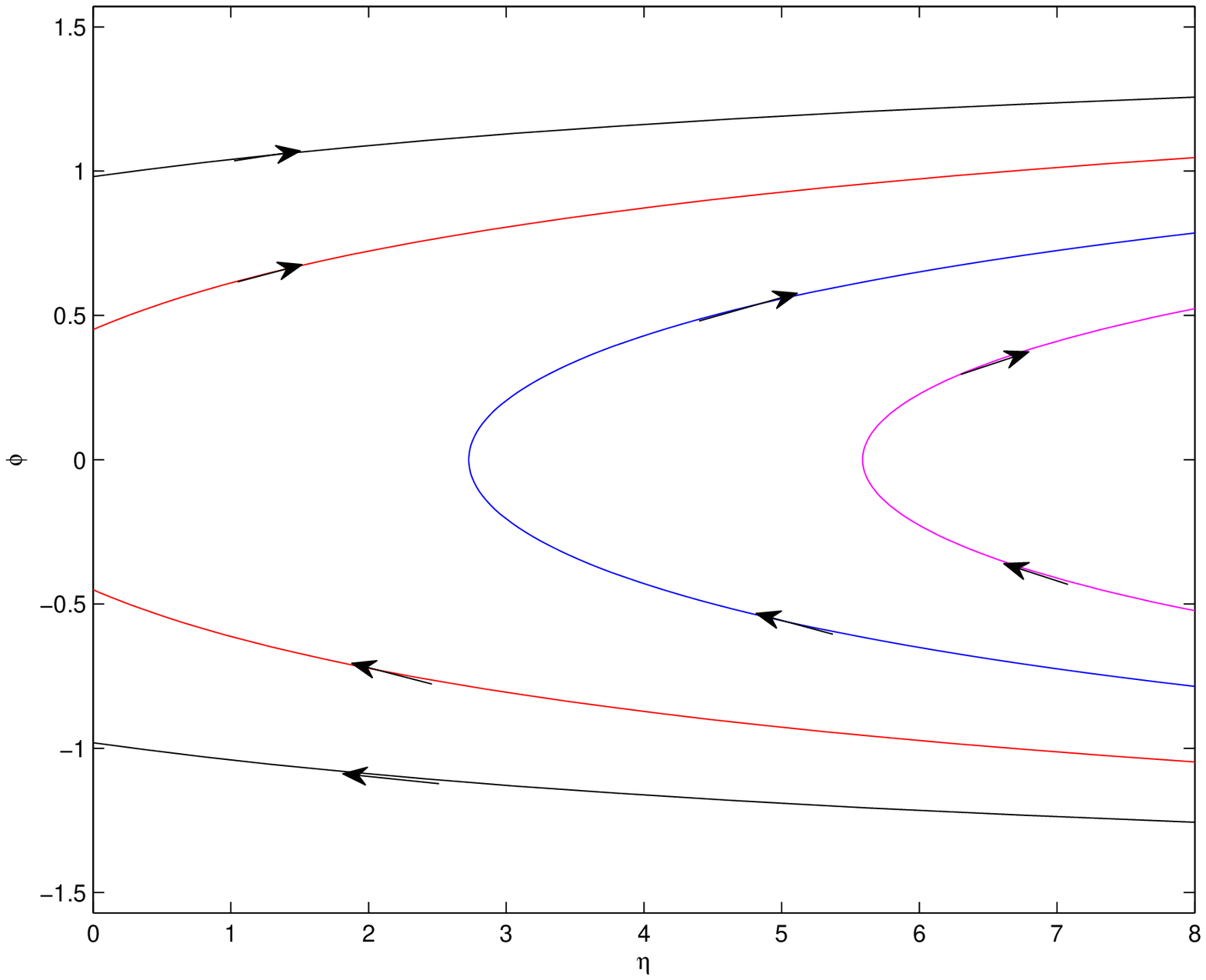}
\caption{Characteristics in Non-Convex Domains} \label{fig 2}
\end{minipage}
\end{figure}
In Figure \ref{fig 1} and Figure \ref{fig 2} \cite{AA006}, the horizontal direction represents the scaled normal variable $\eta$ and the vertical direction represents the velocity $\varphi$. There exists a ``hollow" region in Figure \ref{fig 2} that the characteristics may never track back to the left boundary $\eta=0$ and $\varphi>0$, making the $W^{1,\infty}$ estimates impossible and thus preventing higher-order boundary layer expansion. 

In this paper, we will employ a fresh approach to design a cutoff boundary layer without the geometric correction and justify the $L^2$ diffusive expansion in smooth non-convex domains.

\subsection{Notation and Convention}

Let $\brw{\ \cdot\ ,\ \cdot\ }$ denote the inner product for $w\in\s^2$, $\brx{\ \cdot\ ,\ \cdot\ }$ for $x\in\Omega$, and $\br{\ \cdot\ ,\ \cdot\ }$ for $(x,w)\in\Omega\times\s^2$. Also, let $\br{\ \cdot\ ,\ \cdot\ }_{\gamma_{\pm}}$ denote the inner product on $\gamma_{\pm}$ with measure $\ud\gamma:=\abs{w\cdot n}\ud w\ud S_x=\abs{\sin\varphi}\cos\varphi\ud\vww\ud S_x$. 
Denote the bulk and boundary norms
\begin{align}
    \tnm{f}:=\left(\iint_{\Omega\times\s^2}\abs{f(x,w)}^2\ud w\ud x\right)^{\frac{1}{2}},\quad \tnms{f}{\gamma_{\pm}}:=\left(\int_{\gamma_{\pm}}\abs{f(x,w)}^2\ud\gamma\right)^{\frac{1}{2}}.
\end{align}
Define the $L^{\infty}$ norms
\begin{align}
    \lnm{f}:=\esssup_{(x,w)\in\Omega\times\s^2}\babs{f(x,w)},\quad
    \lnms{f}{\gamma_{\pm}}:=\esssup_{(x,w)\in\gamma_{\pm}}\babs{f(x,w)}.
\end{align}
Let $\nm{\cdot}_{W^{k,p}_x}$ denote the usual Sobolev norm for $x\in\Omega$ and  $\abs{\cdot}_{W^{k,p}_x}$ for $x\in\p\Omega$, and $\nm{\cdot}_{W^{k,p}_xL^q_w}$ denote $W^{k,p}$ norm for $x\in\Omega$ and $L^q$ norm for $\vw\in\s^2$. The similar notation also applies when we replace $L^q$ by $L^q_{\gamma}$. When there is no possibility of confusion, we will ignore the $(x,w)$ variables in the norms.

Throughout this paper, $C>0$ denotes a constant that only depends on
the domain $\Omega$, but does not depend on the data or $\e$. It is
referred as universal and can change from one inequality to another. We write $a\ls b$ to denote $a\leq Cb$ and $a\gs b$ to denote $a\geq Cb$. Also, we write $a\simeq b$ if $a\ls b$ and $a\gs b$.
We will use $\oo$ to denote a sufficiently small constant independent of the data.

\subsection{Main Results}

\begin{theorem}\label{main theorem}
Under the assumption 
\begin{align}\label{assumption}
    \abs{g}_{W^{3,\infty}L^{\infty}_{\gamma_-}}\ls 1,
\end{align}
there exists a unique solution $u^{\e}(\vx,\vw)\in L^{\infty}(\Omega\times\s^2)$ to \eqref{transport}. Moreover, the solution obeys the estimate
\begin{align}\label{main}
\tnm{u^{\e}-\u_0}\ls\e^{\frac{1}{2}}.
\end{align}
Here $\u_0(\vx)$ satisfies the Laplace equation with Dirichlet boundary condition
\begin{align}
\left\{
\begin{array}{l}
\Delta_x\u_0(\vx)=0\ \ \text{in}\
\ \Omega,\\\rule{0ex}{1.2em}
\u_0(\vx_0)=\bl_{\infty}(\vx_0)\ \ \text{on}\ \
\p\Omega,
\end{array}
\right.
\end{align}
in which $\bl_{\infty}(\iota_1,\iota_2)=\bl_{\infty}(\vx_0)$ for $\vx_0\in\p\Omega$ is given by solving the Milne problem for  $\bl(\eta,\iota_1,\iota_2,\vww)$
\begin{align}
\left\{
\begin{array}{l}
\sin\varphi\dfrac{\p \bl }{\p\eta}+\bl -\bbl =0,\\\rule{0ex}{1.2em}
\bl (0,\iota_1,\iota_2,\vww)=g(\iota_1,\iota_2,\vww)\ \ \text{for}\ \
\sin\varphi>0,\\\rule{0ex}{1.5em}
\ds\lim_{\eta\rt\infty}\bl (\eta,\iota_1,\iota_2,\vww)=\bl_{\infty}(\iota_1,\iota_2).
\end{array}
\right.
\end{align}
\end{theorem}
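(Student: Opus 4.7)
The plan is to rigorously justify the expansion $u^{\e}=\u_0+\e\u_1+\e^2\u_2+\uu_0\chi+\re$ and prove an $L^2$ remainder estimate $\tnm{\re}\ls\e^{1/2}$, from which the main estimate \eqref{main} follows because $\e\u_1$, $\e^2\u_2$ and the boundary layer $\uu_0\chi$ (concentrated in an $\e$-thin layer) all have $L^2$ norm controlled by $\e^{1/2}$. First I would assemble the approximate solution. Matching powers of $\e$ in \eqref{transport}, the interior profile $\u_0(x)$ must be independent of $w$, the first corrector is forced to be $\u_1=-w\cdot\nx\u_0$, a Fredholm compatibility condition at the next order yields $\dx\u_0=0$, and $\u_2$ is defined by inverting $f\mapsto f-\overline{f}$ on the microscopic part. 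The boundary layer $\uu_0$ is taken to solve the \emph{flat} Milne problem \eqref{final 12} (no geometric correction), which determines the Dirichlet data $\bl_\infty$ for $\u_0$ from the asymptotic value at $\eta\to\infty$; because $\p_\eta\uu_0$ is singular at $\sin\varphi=0$, I would multiply $\uu_0$ by a smooth cutoff $\chi$ vanishing near the grazing set $\varphi=0$ and outside a neighborhood of $\p\Omega$.

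Next I would derive the remainder equation by substitution into \eqref{transport}. This produces
\begin{align*}
w\cdot\nx\re+\e^{-1}(\re-\bre)=\ss,\qquad \re|_{\gamma_-}=\g-u_a|_{\gamma_-},
\end{align*}
where $\ss$ collects (i) the genuine interior error $-\e^2 w\cdot\nx\u_2$, (ii) the geometric error from the flat Milne problem (the missing $\dfrac{\e}{1+\e\eta}\cos\varphi\,\p_\varphi$ term acting on $\uu_0$), and (iii) the commutator terms from the cutoff $\chi$, including $w\cdot\nx\chi\cdot\uu_0$. The crucial observation is that all boundary layer source terms are supported in an $\e$-thin shell near $\p\Omega$, so the volume factor contributes an $\e^{1/2}$ gain in $L^2$ and $\tnm{\ss}\ls 1$ uniformly, and in fact the pieces proportional to $\e$ or supported away from the grazing singularity give a better power.

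Then comes the core energy estimate. Multiplying the remainder equation by $\re$ and integrating yields the standard microscopic coercivity
\begin{align*}
\e^{-1}\tnm{\re-\bre}^2+\tnms{\re}{\gamma_+}^2\ls\tnm{\ss}\tnm{\re}+\tnms{\g-u_a}{\gamma_-}^2,
\end{align*}
which controls $\re-\bre$ by $\e^{1/2}\tnm{\re}^{1/2}$ but leaves $\bre$ uncontrolled. The novel $\e^{1/2}$ gain for $\bre$ is obtained by a duality/test-function argument: solve the Poisson problem $-\dx\test=\bre$ in $\Omega$ with $\test|_{\p\Omega}=0$, use $\phi=-w\cdot\nx\test$ as a test function in the remainder equation, and integrate by parts. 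Since $\phi$ is microscopic ($\overline{\phi}=0$) and $\overline{w_iw_j}=\tfrac{1}{3}\delta_{ij}$, the principal part of $\br{w\cdot\nx\re,\phi}$ reproduces $\tfrac{1}{3}\tnm{\bre}^2$ after using $-\dx\test=\bre$; the $\e^{-1}$ term contracts against the microscopic $\phi$ to give $\e^{-1}\tnm{\re-\bre}\tnm{\nx\test}\ls\e^{-1}\tnm{\re-\bre}\tnm{\bre}$, and the right-hand side contributes $\tnm{\ss}\tnm{\bre}$. Elliptic regularity $\tnm{\nx\test}\ls\tnm{\bre}$ closes the chain to give $\tnm{\bre}\ls\tnm{\ss}+\e^{-1}\tnm{\re-\bre}$, which combined with the microscopic estimate yields $\tnm{\re}\ls\e^{1/2}$ after bootstrapping.

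The hardest step will be the careful accounting of the source terms $\ss$ in $L^2$, particularly the commutator with the grazing-set cutoff $\chi$ and the geometric-correction defect $\frac{\e\cos\varphi}{1+\e\eta}\p_\varphi\uu_0$ in the non-convex geometry, since it is precisely the failure of these objects to lie in $L^\infty$ that invalidates the previous approach. Here the $L^2$ setting is essential: the grazing region near $\sin\varphi=0$ has small measure, and the $\e$-thin boundary layer combined with the mild singularity of $\p_\varphi\uu_0$ can be shown to contribute at worst an $O(1)$ term in $\tnm{\ss}$ (with genuinely higher powers of $\e$ for better-behaved pieces), which is precisely what the duality scheme above can absorb to recover the advertised $\e^{1/2}$ rate.
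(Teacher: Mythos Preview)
Your overall architecture matches the paper's, but the kernel (duality) step has a genuine gap that prevents the argument from closing at the claimed rate.

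\medskip
\textbf{The missing cancellation.} When you test with $\phi=w\cdot\nabla_x\xi$, you keep the term $\e^{-1}\langle R-\bar R,\,w\cdot\nabla_x\xi\rangle$ and bound it by $\e^{-1}\|R-\bar R\|_{L^2}\,\|\bar R\|_{L^2}$, arriving at
\[
\|\bar R\|_{L^2}\;\lesssim\;\|S\|_{L^2}+\e^{-1}\|R-\bar R\|_{L^2}.
\]
This does \emph{not} close against the energy estimate. From your own energy inequality $\e^{-1}\|R-\bar R\|_{L^2}^2\lesssim \|S\|_{L^2}\|R\|_{L^2}$ with $\|S\|_{L^2}\lesssim 1$ you get $\|R-\bar R\|_{L^2}\lesssim \e^{1/2}\|R\|_{L^2}^{1/2}$, hence $\e^{-1}\|R-\bar R\|_{L^2}\lesssim \e^{-1/2}\|R\|_{L^2}^{1/2}$. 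Substituting back and using Young's inequality gives only $\|R\|_{L^2}\lesssim \e^{-1}$, the old trivial bound. Even with the paper's sharper energy estimate $\e^{-2}\|R-\bar R\|_{L^2}^2\lesssim o(1)\e^{-1}\|\bar R\|_{L^2}^2+1$, your kernel bound produces $(1-o(1)\e^{-1})\|\bar R\|_{L^2}^2\lesssim 1$, which is useless since $o(1)\e^{-1}\to\infty$.

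The paper's key new idea is to \emph{also} test with the scalar $\xi$ itself. Because $\xi$ is independent of $w$ and $\xi|_{\partial\Omega}=0$, this yields the conservation law $-\langle R-\bar R,\,w\cdot\nabla_x\xi\rangle=\langle S,\xi\rangle$. Adding $\e^{-1}$ times this identity to the $w\cdot\nabla_x\xi$ weak formulation \emph{exactly cancels} the dangerous $\e^{-1}\langle R-\bar R,\,w\cdot\nabla_x\xi\rangle$ term, leaving
\[
\|\bar R\|_{L^2}^2 \;\lesssim\; \|R-\bar R\|_{L^2}^2 + |R|_{L^2(\gamma_+)}^2 + \bigl|\e^{-1}\langle S,\xi\rangle\bigr| + \bigl|\langle S,\,w\cdot\nabla_x\xi\rangle\bigr|,
\]
with no $\e^{-1}$ in front of $\|R-\bar R\|_{L^2}$. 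This is precisely the ``surprising $\e^{1/2}$ gain'' advertised in the abstract.

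\medskip
\textbf{Source terms need more than $\|S\|_{L^2}\lesssim 1$.} A second, smaller gap: even after the cancellation, the right-hand side now contains $\e^{-1}\langle S,\xi\rangle$, and a crude Cauchy--Schwarz $\e^{-1}\|S\|_{L^2}\|\xi\|_{L^2}$ with $\|S\|_{L^2}\lesssim 1$ would only yield $\|\bar R\|_{L^2}\lesssim 1$. The paper exploits $\xi|_{\partial\Omega}=0$ together with Hardy's inequality and the boundary-layer localization (writing $\xi=\int_0^\mu\partial_\mu\xi$ and converting the factor $\mu^{-1}$ into $\e^{-1}\eta^{-1}$), plus an integration by parts in $\varphi$ for the $\partial_\varphi U^B_0$ piece and $L^2_xL^1_w$ bounds on the remaining sources, to obtain $|\e^{-1}\langle S,\xi\rangle|+|\langle S,w\cdot\nabla_x\xi\rangle|\lesssim \e^{1/2}\|\bar R\|_{L^2}$. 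Your plan's ``$\|S\|_{L^2}\lesssim 1$ is enough'' is not; you need these refined pairings.
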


\begin{remark}
    In \cite{AA003,AA014,AA016} for 2D/3D convex domains, as well as \cite{AA006} for 2D annulus domain, it is justified that for any $0<\d\ll1$
    \begin{align}
    \tnm{u^{\e}-\widetilde{\u_0}-\widetilde{\uu_0}}\ls\e^{\frac{5}{6}-\delta},
    \end{align}
    where $\widetilde{\uu_0}(\eta,\iota_1,\iota_2,\vww)$ is the boundary layer with geometric correction defined in \eqref{final 13}, and $\widetilde{\u_0}$ is the corresponding interior solution. \cite[Theorem 2.1]{Li.Lu.Sun2017} reveals that the difference between two types of interior solutions
    \begin{align}
        \tnm{\widetilde{\u_0}-\u_0}\ls\e^{\frac{2}{3}}.
    \end{align}
    Due to the rescaling $\eta=\e^{-1}\mn$, for general in-flow boundary data $g$, the boundary layer $\widetilde{\uu_0}\neq0$ satisfies
    \begin{align}
        \tnm{\widetilde{\uu_0}}\simeq\e^{\frac{1}{2}}.
    \end{align}
    Hence, we conclude that
    \begin{align}
        \tnm{u^{\e}-\u_0}\simeq\e^{\frac{1}{2}}.
    \end{align}
    Therefore, this indicates that \eqref{main} in Theorem \ref{main theorem} achieves the optimal $L^2$ bound of the diffusive approximation.
\end{remark}

\subsection{Methodology}

It is well-known that the key of the remainder estimate is to control $\bre$. In a series of work \cite{AA003,AA006,AA007,AA009,AA014,AA016} based on a $L^{2}\rt L^{\infty }$ framework, it is shown that
\begin{align}
\tnm{\bre}\ls \e^{-1}\tnm{\ire}\ls 1
\end{align}
combined from the expected energy (entropy production) bound for $\e^{-1}\tnm{\ire}$. This bound requires the next-order $\e$ expansion of boundary layer approximation, which is impossible for non-convex domains, and barely possible by the new boundary layer theory with the $\e$-geometric correction. The key improvement in our work is
\begin{align}
\tnm{\bre}\ls \e^{\frac{1}{2}}
\end{align}
which is a consequence of the following conservation law for test function $\test(\vx)$ satisfying $-\Delta_x\test=\bre$ and $\test\big|_{\p\Omega}=0$:
\begin{align}\label{kk 01}
    -\bbr{\re,w\cdot\nx\test}=-\bbr{\ire,w\cdot\nx\test}=\bbr{\ss, \test},
\end{align}
where $\bbr{\bre,w\cdot\nx\test}=0$ thanks to the oddness. This conservation law exactly cancels
the worst contribution of $\e^{-1}\tnm{\ire}$ in $\tnm{\bre}$ estimate, which comes from taking test function $w\cdot\nx\test$
\begin{align}\label{kk 02}
    \int_{\gamma}\re\big(w\cdot\nx\test\big)(\vw\cdot n)-\bbr{\re,w\cdot\nx\big(w\cdot\nx\test\big)}+\e^{-1}\bbr{\ire, w\cdot\nx\test}=\bbr{\ss, w\cdot\nx\test}.
\end{align}
Such a key cancellation produces an extra crucial gain of $\e^{\frac{1}{2}}$.  We then conclude the remainder estimate without any further expansion of the (singular) boundary layer approximation.

In addition, we construct a new cut-off boundary layer near $\varphi=0$ to avoid the singularity, and are able to perform delicate and precise estimates to control the resulting complex forcing term $\ss$ (see \eqref{d:s0}--\eqref{d:s3}), in terms of the desired order $\e$ for closure.

\section{Asymptotic Analysis}

\subsection{Interior Solution}\label{sec:interior}

Inserting \eqref{expand 1} into \eqref{transport} and comparing the order of $\e$, following the analysis in \cite{AA009, AA016}, we deduce that
\begin{align}\label{expand 3}
&\u_0=\bu_0,\quad
\Delta_x\bu_0=0,\\\label{expand 4}
&\u_1=\bu_1-\vw\cdot\nx\u_{0},\quad
\Delta_x\bu_1=0,\\\label{expand 4'}
&\u_2=\bu_2-\vw\cdot\nx\u_{1},\quad
\Delta_x\bu_2=0.
\end{align}
We need the boundary layer to determine the boundary conditions for $\bu_0$, $\bu_1$ and $\bu_2$.

\subsection{Boundary Layer}\label{sec:boundary-layer}

\subsubsection{Geometric Substitutions}

The construction of boundary layer requires a local description in a neighborhood of the physical boundary $\p\Omega$. We follow the procedure in \cite{AA009, AA016}:\\

\paragraph{\underline{Substitution 1: Spacial Substitution}}
Following the notation in Section \ref{sec:geometric-setup}, under the coordinate system $(\mu,\iota_1,\iota_2)$, we have
\begin{align}
\vw\cdot\nx=-(\vw\cdot\vn)\frac{\p}{\p\mu}-\frac{\vw\cdot\vt_1}{\pl_1(\kk_1\mu-1)}\frac{\p}{\p\iota_1}-\frac{\vw\cdot\vt_2}{\pl_2(\kk_2\mu-1)}\frac{\p}{\p\iota_2},
\end{align}
where $\kk_i(\iota_1,\iota_2)$ for $i=1,2$ is the principal curvature.\\

\paragraph{\underline{Substitution 2: Velocity Substitution}}
Under the orthogonal velocity substitution \eqref{velocity}
for $\varphi\in\left[-\dfrac{\pi}{2},\dfrac{\pi}{2}\right]$ and $\psi\in[-\pi,\pi]$, we have
\begin{align}\label{expand 5}
\vw\cdot\nx=&\sin\varphi\frac{\p}{\p\mu}-\bigg(\frac{\sin^2\psi}{R_1-\mu}+\frac{\cos^2\psi}{R_2-\mu}\bigg)\cos\varphi\frac{\p}{\p\varphi}+\frac{\cos\varphi\sin\psi}{\pl_1(1-\kk_1\mu)}\frac{\p}{\p\iota_1}+\frac{\cos\varphi\cos\psi}{\pl_2(1-\kk_2\mu)}\frac{\p}{\p\iota_2}\\
&+\frac{\sin\psi}{R_1-\mu}\bigg\{\frac{R_1\cos\varphi}{\pl_1\pl_2}\bigg(\vt_1\cdot\Big(\vt_2\times\big(\p_{\iota_1\iota_2}\vr\times\vt_2\big)\Big)\bigg)
-\sin\varphi\cos\psi\bigg\}\frac{\p}{\p\psi}\no\\
&
-\frac{\cos\psi}{R_2-\mu}\bigg\{\frac{R_2\cos\varphi}{\pl_1\pl_2}\bigg(\vt_2\cdot\Big(\vt_1\times\big(\p_{\iota_1\iota_2}\vr\times\vt_1\big)\Big)\bigg)
-\sin\varphi\sin\psi\bigg\}\frac{\p}{\p\psi},\no
\end{align}
where $R_i=\kk_i^{-1}$ represents the radius of curvature. Note that the Jacobian $\ud\vw=\cos\varphi\ud\varphi\ud\psi$ will be present when we perform integration.\\

\paragraph{\underline{Substitution 3: Scaling Substitution}}
Considering the scaled normal variable $\eta=\e^{-1}\mu$, we have 
\begin{align}\label{expand 6}
\vw\cdot\nx=&\e^{-1}\sin\varphi\frac{\p}{\p\eta}-\bigg(\frac{\sin^2\psi}{R_1-\e\eta}+\frac{\cos^2\psi}{R_2-\e\eta}\bigg)\cos\varphi\frac{\p}{\p\varphi}+\frac{R_1\cos\varphi\sin\psi}{\pl_1(R_1-\e\eta)}\frac{\p}{\p\iota_1}+\frac{R_2\cos\varphi\cos\psi}{\pl_2(R_2-\e\eta)}\frac{\p}{\p\iota_2}\\
&+\frac{\sin\psi}{R_1-\e\eta}\bigg\{\frac{R_1\cos\varphi}{\pl_1\pl_2}\bigg(\vt_1\cdot\Big(\vt_2\times\big(\p_{\iota_1\iota_2}\vr\times\vt_2\big)\Big)\bigg)
-\sin\varphi\cos\psi\bigg\}\frac{\p}{\p\psi}\no\\
&
-\frac{\cos\psi}{R_2-\e\eta}\bigg\{\frac{R_2\cos\varphi}{\pl_1\pl_2}\bigg(\vt_2\cdot\Big(\vt_1\times\big(\p_{\iota_1\iota_2}\vr\times\vt_1\big)\Big)\bigg)
-\sin\varphi\sin\psi\bigg\}\frac{\p}{\p\psi}.\no
\end{align}

\subsubsection{Milne Problem}

Let $\bl(\eta,\iota_1,\iota_2,\vww)$ be the solution to the Milne problem
\begin{align}
\sin\varphi\frac{\p\bl}{\p\eta}+\bl-\bbl=&0,\quad\bbl(\eta,\iota_1,\iota_2)=\frac{1}{4\pi}\int_{-\pi}^{\pi}\int_{-\frac{\pi}{2}}^{\frac{\pi}{2}}\bl(\eta,\iota_1,\iota_2,\vww)\cos\varphi\ud{\varphi}\ud{\psi}, \label{expand 7}
\end{align}
with boundary condition
\begin{align}
\bl(0,\iota_1,\iota_2,\vww)=g(\iota_1,\iota_2,\vww)\ \ \text{for}\ \ \sin\varphi>0.
\end{align}
We are interested in the solution that satisfies
\begin{align}
    \lim_{\eta\rt\infty}\bl(\eta,\iota_1,\iota_2,\vww)=\bl_{\infty}(\iota_1,\iota_2)
\end{align}
for some $\bl_{\infty}(\iota_1,\iota_2)$. Based on \cite[Section 4]{AA009}, we have the well-posedness and regularity of \eqref{expand 7}.

\begin{proposition}\label{prop:boundary-wellposedness}
    Under the assumption \eqref{assumption}, there exist $\bl_{\infty}(\iota_1,\iota_2)$ and a unique solution $\bl$ to \eqref{expand 7}such that $\bll:=\bl-\bl_{\infty}$ satisfies 
    \begin{align}
        \left\{
        \begin{array}{l}
        \sin\varphi\dfrac{\p\bll}{\p\eta}+\bll-\bbll=0,\\\rule{0ex}{1.2em}
        \bll(0,\iota_1,\iota_2,\vww)=g(\iota_1,\iota_2,\vww)-\bl_{\infty}(\iota_1,\iota_2),\\\rule{0ex}{1.5em}
        \ds\lim_{\eta\rt0}\bll(\eta,\iota_1,\iota_2,\vww)=0,
        \end{array}
        \right.
    \end{align}
    and for some constant $K>0$ and any $0<\N\leq 3$
    \begin{align}
    \abs{\bl_{\infty}}_{W^{3,\infty}_{\iota_1,\iota_2}}+\lnm{\ue^{K\eta}\bll}\ls& 1,\\
    \lnm{\ue^{K\eta}\sin\varphi\frac{\p\bll}{\p\eta}}+\lnm{\ue^{K\eta}\sin\varphi\frac{\p\bll}{\p\varphi}}+\lnm{\ue^{K\eta}\frac{\p\bll}{\p\psi}}\ls& 1,\\
    \lnm{\ue^{K\eta}\frac{\p^{\N}\bll}{\p\iota_1^{\N}}}+\lnm{\ue^{K\eta}\frac{\p^{\N}\bll}{\p\iota_2^{\N}}}\ls& 1.
    \end{align}
\end{proposition}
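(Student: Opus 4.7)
The plan is to reduce everything to the half-space Milne theory developed in \cite{AA009}, with the parameters $(\iota_1,\iota_2)$ treated as frozen and the tangential/velocity derivatives recast as Milne problems for auxiliary unknowns. First, for each fixed $(\iota_1,\iota_2)$, the classical theory for the one-dimensional Milne problem on $\eta\geq 0$ yields a unique bounded $\bl$ with in-flow data $g(\iota_1,\iota_2,\cdot)$ and produces the limit $\bl_\infty(\iota_1,\iota_2)$ as the ``null-space component'' of $\bl$ at infinity. Subtracting this limit gives $\bll$ with zero far-field and in-flow data $g-\bl_\infty$; the standard spectral-gap argument (or explicit representation via Chandrasekhar's $H$-function reduction) produces the exponential bound $\lnm{\ue^{K\eta}\bll}\ls 1$ for some universal $K>0$, with uniform dependence on the parameters.

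Next, I would harvest the $\eta$-derivative bound directly from the equation: since $\sin\varphi\,\p_\eta\bll=-\bll+\bbll$, the $L^\infty$ bound on $\bll$ immediately yields $\lnm{\ue^{K\eta}\sin\varphi\,\p_\eta\bll}\ls 1$. The $\psi$-derivative bound is equally clean: $\psi$ appears nowhere in the Milne operator, so $\p_\psi\bll$ solves the same Milne equation with in-flow data $\p_\psi g$ and zero far-field (note that $\p_\psi\bl_\infty\equiv 0$), and another application of the basic theory gives $\lnm{\ue^{K\eta}\p_\psi\bll}\ls 1$. For the tangential derivatives $\p_{\iota_i}^{\N}\bll$ with $1\le \N\le 3$, the same observation applies: $(\iota_1,\iota_2)$ enter the equation only parametrically, so $\p_{\iota_i}^{\N}\bll$ satisfies the Milne problem with in-flow data $\p_{\iota_i}^{\N}(g-\bl_\infty)$. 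One determines $\p_{\iota_i}^{\N}\bl_\infty$ inductively as the null-space limit associated with $\p_{\iota_i}^{\N}g$, and the hypothesis $\abs{g}_{W^{3,\infty}L^\infty_{\gamma_-}}\ls 1$ guarantees that every data term lies in $L^\infty$, so the basic Milne estimate closes both the $\bl_\infty$ Sobolev bound and the exponentially-weighted $L^\infty$ bound on $\p_{\iota_i}^{\N}\bll$.

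The $\p_\varphi$ estimate is the delicate one, and it is where the grazing singularity appears. Differentiating the Milne equation in $\varphi$ produces
\begin{align*}
\sin\varphi\,\p_\eta(\p_\varphi\bll)+\cos\varphi\,\p_\eta\bll+\p_\varphi\bll-\p_\varphi\bbll=0,
\end{align*}
which is not of Milne form, and the inhomogeneity $\cos\varphi\,\p_\eta\bll$ is unbounded as $\sin\varphi\to 0$. The natural fix is to work with $F:=\sin\varphi\,\p_\varphi\bll$; multiplying the above by $\sin\varphi$ and using $\sin\varphi\,\p_\eta\bll=\bbll-\bll$ one rewrites the equation in a Milne-compatible form for $F$ whose right-hand side is already controlled in $L^\infty$ by the zeroth-order bound. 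A characteristics argument along $\eta$-lines, splitting the cases $\sin\varphi\gtrless 0$ and tracking the exponential damping provided by the $\bll$-part of the symbol, then delivers $\lnm{\ue^{K\eta}\sin\varphi\,\p_\varphi\bll}\ls 1$.

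The main obstacle is precisely this last step: the factor $\sin\varphi$ in front of $\p_\varphi\bll$ is non-negotiable because $\p_\varphi\bll$ itself is genuinely singular at $\varphi=0$, as documented in \cite{AA003}. This is what prevents the higher-order $L^\infty$ boundary layer expansion in non-convex domains and motivates the $L^2$ approach of the present paper; at the level of this proposition, however, the $\sin\varphi$-weighted bound is all that is needed, and it follows from the reformulation above together with the already-established theory in \cite[Section~4]{AA009}.
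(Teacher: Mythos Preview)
The paper does not supply an independent proof of this proposition; it simply invokes \cite[Section~4]{AA009} for the well-posedness and regularity of the Milne problem. Your proposal is a faithful sketch of exactly that cited theory --- freezing $(\iota_1,\iota_2)$, extracting $\bl_\infty$ and the exponential decay of $\bll$ from the standard half-space analysis, reading off $\sin\varphi\,\p_\eta\bll$ from the equation, and commuting $\p_\psi$ and $\p_{\iota_i}^{\N}$ through the Milne operator as parametric derivatives --- so it matches the paper's approach.

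One small correction worth noting: when you differentiate in $\psi$ (or $\iota_i$), the resulting equation for $\p_\psi\bll$ is $\sin\varphi\,\p_\eta(\p_\psi\bll)+\p_\psi\bll=0$, since $\bbll$ is independent of $\psi$; this is not literally the Milne problem but a pure damped transport equation along characteristics, which is even easier (and one checks $\overline{\p_\psi\bll}=0$ by periodicity, so it can also be recast in Milne form). Similarly, your equation for $F=\sin\varphi\,\p_\varphi\bll$ reads $\sin\varphi\,\p_\eta F+F=\cos\varphi(\bll-\bbll)$, again a damped transport equation with exponentially decaying right-hand side, handled by integrating along characteristics rather than by the full Milne machinery. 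These are minor points of phrasing; the substance of your argument is correct and is what \cite{AA009} does.
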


Let $\chi(y)\in C^{\infty}(\r)$
and $\ch(y)=1-\chi(y)$ be smooth cut-off functions satisfying $\chi(y)=1$ if $\abs{y}\leq1$ and $\chi(y)=0$ if $\abs{y}\geq2$.
We define the boundary layer 
\begin{align}\label{boundary layer}
\uu_0(\eta,\iota_1,\iota_2,\vww):=\ch\big(\e^{-1}\varphi\big)\chi(\e\eta)\bll(\eta,\iota_1,\iota_2,\vww).
\end{align}

\begin{remark}
    Due to the cutoff in \eqref{boundary layer}, we have
    \begin{align}\label{expand 8}
        \uu_0(0,\iota_1,\iota_2,\vww)=\ch\big(\e^{-1}\varphi\big)\Big(g(\iota_1,\iota_2,\vww)-\bl_{\infty}(\iota_1,\iota_2)\Big)=\ch\big(\e^{-1}\varphi\big)\bll(0,\iota_1,\iota_2,\vww),
    \end{align}
    and
    \begin{align}
        \sin\varphi\dfrac{\p\uu_0}{\p\eta}+\uu_0-\overline{\uu_0}=-\overline{\ch\big(\e^{-1}\varphi\big)\chi(\e\eta)\bll}+\bbll\ch(\e^{-1}\varphi)\chi(\e\eta).
    \end{align}
\end{remark}

\subsection{Matching Procedure}

We plan to enforce the matching condition for $\vx_0\in\p\Omega$ and $\vw\cdot\vn<0$
\begin{align}
\u_0(\vx_0)+\uu_0(\vx_0,\vw)=&g(\vx_0,\vw)+O(\e).
\end{align}
Considering \eqref{expand 8}, it suffices to require
\begin{align}\label{expand 9}
    \u_0(\vx_0)=\bl_{\infty}(\vx_0):=\bl_{\infty}(\iota_1,\iota_2),
\end{align}
which yields
\begin{align}
\u_0(\vx_0)+\bll(\vx_0,\vw)=&g(\vx_0,\vw).
\end{align}
Hence, we obtain
\begin{align}
    \u_0(\vx_0,\vw)+\uu_0(\vx_0,\vw)=g(\vx_0,\vw)+\chi\big(\e^{-1}\varphi\big)\bll(0,\iota_1,\iota_2,\vww).
\end{align}

\paragraph{\underline{Construction of $\u_0$}}

Based on \eqref{expand 3} and \eqref{expand 9}, define $\u_0(\vx)$ satisfying
\begin{align}\label{expand 12}
    \u_0=\bu_0,\quad \Delta_x\bu_0=0,\quad \u_0(\vx_0)=\bl_{\infty}(\vx_0).
\end{align}
From standard elliptic estimates \cite{Krylov2008} and Proposition \ref{prop:boundary-wellposedness}, we have for any $\NN\in[2,\infty)$
\begin{align}\label{expand 10}
    \nm{\u_0}_{W^{3+\frac{1}{\NN},\NN}}+\abs{\u_0}_{W^{3,\NN}}\ls 1.
\end{align}

\paragraph{\underline{Construction of $\u_1$}}

Based on \eqref{expand 4}, define $\u_1(\vx,\vw)$ satisfying
\begin{align}\label{expand 13}
    \u_1=\bu_1-\vw\cdot\nx\u_{0},\quad\Delta_x\bu_1=0,\quad \bu_1(\vx_0)=0.
\end{align}
From \eqref{expand 10}, we have for any $\NN\in[2,\infty)$
\begin{align}\label{expand 11}
    \nm{\u_1}_{W^{2+\frac{1}{\NN},\NN}L^{\infty}}+\abs{\u_1}_{W^{2,\NN}L^{\infty}}\ls 1.
\end{align}

\paragraph{\underline{Construction of $\u_2$}}

Based on \eqref{expand 4}, define $\u_2(\vx,\vw)$ satisfying
\begin{align}\label{expand 13'}
    \u_2=\bu_2-\vw\cdot\nx\u_{1},\quad\Delta_x\bu_2=0,\quad \bu_2(\vx_0)=0.
\end{align}
From \eqref{expand 11}, we have for any $\NN\in[2,\infty)$
\begin{align}\label{expand 11'}
    \nm{\u_2}_{W^{1+\frac{1}{\NN},\NN}L^{\infty}}+\abs{\u_2}_{W^{1,\NN}L^{\infty}}\ls 1.
\end{align}

Summarizing the above analysis, we have the well-posedness and regularity estimates of the interior solution and boundary layer:

\begin{proposition}\label{prop:wellposedness}
    Under the assumption \eqref{assumption}, we can construct $\u_0,\u_1,\u_2, \uu_0$ as in \eqref{expand 12}\eqref{expand 13}\eqref{expand 13'}\eqref{boundary layer} satisfying for any $\NN\in[2,\infty)$
    \begin{align}
        \nm{\u_0}_{W^{3+\frac{1}{\NN},\NN}}+\abs{\u_0}_{W^{3,\NN}}\ls& 1,\\
        \nm{\u_1}_{W^{2+\frac{1}{\NN},\NN}L^{\infty}}+\abs{\u_1}_{W^{2,\NN}L^{\infty}}\ls& 1,\\
        \nm{\u_2}_{W^{1+\frac{1}{\NN},\NN}L^{\infty}}+\abs{\u_2}_{W^{1,\NN}L^{\infty}}\ls& 1,
    \end{align}
    and for some constant $K>0$ and any $0<\N\leq 3$
    \begin{align}
        \lnm{\ue^{K\eta}\uu_0}+\lnm{\ue^{K\eta}\frac{\p^{\N}\uu_0}{\p\iota_1^{\N}}}+\lnm{\ue^{K\eta}\frac{\p^{\N}\uu_0}{\p\iota_2^{\N}}}\ls& 1.
    \end{align}
\end{proposition}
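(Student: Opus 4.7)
The statement is a packaging of three separate regularity facts, all of which flow from Proposition 2.1 combined with classical elliptic theory. I would organise the proof in three short parts following the three groups of inequalities.

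\emph{Part 1: the interior solution $\u_0$.} By Proposition 2.1, the boundary trace $\bl_\infty(\iota_1,\iota_2)$ lies in $W^{3,\infty}(\p\Omega)\hookrightarrow W^{3,\NN}(\p\Omega)$ for every $\NN\in[2,\infty)$, with norm $\lesssim 1$. Since $\u_0$ solves $\Delta_x\u_0=0$ in a $C^3$ domain with this Dirichlet data, standard fractional-order elliptic regularity (the reference cited in the paper, Krylov) gives $\u_0\in W^{3+1/\NN,\NN}(\Omega)$ uniformly in $\NN$; the trace theorem $W^{3+1/\NN,\NN}(\Omega)\to W^{3,\NN}(\p\Omega)$ then recovers the boundary piece of the estimate. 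The only care needed is to ensure the correct fractional exponent: the solution of a Laplace problem with $W^{k,p}$ Dirichlet data on a $C^k$ domain sits in $W^{k+1/p,p}$, not in $W^{k+1,p}$.

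\emph{Part 2: the interior solutions $\u_1$ and $\u_2$.} The construction \eqref{expand 13} forces $\bu_1$ to be harmonic with zero trace, hence $\bu_1\equiv 0$ by uniqueness, so $\u_1(x,w)=-w\cdot\nx\u_0(x)$. Since $|w|\le 1$, the $L^\infty_w$ norm is absorbed trivially and each $x$-derivative loses exactly one order relative to $\u_0$, giving
\begin{align*}
\nm{\u_1}_{W^{2+1/\NN,\NN}L^{\infty}}+\abs{\u_1}_{W^{2,\NN}L^{\infty}}\ls \nm{\u_0}_{W^{3+1/\NN,\NN}}+\abs{\u_0}_{W^{3,\NN}}\ls 1.
\end{align*}
The identical argument applied to $\bu_2\equiv 0$ and $\u_2=-w\cdot\nx\u_1$ yields the third line. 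The boundary-trace half of each inequality comes from the trace theorem applied one order lower than the bulk norm.

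\emph{Part 3: the boundary layer $\uu_0$.} By definition \eqref{boundary layer}, $\uu_0=\ch(\e^{-1}\varphi)\chi(\e\eta)\bll$. Neither cut-off depends on $\iota_1$ or $\iota_2$, so for each $0<r\le 3$,
\begin{align*}
\p_{\iota_i}^{r}\uu_0=\ch(\e^{-1}\varphi)\chi(\e\eta)\,\p_{\iota_i}^{r}\bll,
\end{align*}
and since $|\ch|,|\chi|\le 1$ pointwise, the desired bound
$\lnm{\ue^{K\eta}\p_{\iota_i}^{r}\uu_0}\ls\lnm{\ue^{K\eta}\p_{\iota_i}^{r}\bll}\ls 1$
is immediate from Proposition 2.1, uniformly in $\e$. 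The bound on $\lnm{\ue^{K\eta}\uu_0}$ is the $r=0$ case of the same estimate.

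\emph{Main obstacle.} None of the three parts is deep; the genuine analytic content lives entirely in Proposition 2.1 (which provides $W^{3,\infty}$ regularity of $\bl_\infty$ and exponential-decay tangential regularity of $\bll$) and in the elliptic regularity theorem imported from Krylov. The sole place where one must be careful is the fractional index $3+1/\NN$ in Part~1: one has to match the Sobolev/Besov scale used for the Dirichlet problem with the trace scale so that both the bulk norm $\nm{\cdot}_{W^{3+1/\NN,\NN}}$ and the boundary norm $\abs{\cdot}_{W^{3,\NN}}$ are controlled by the same $W^{3,\infty}$ input. Beyond that, Parts~2 and~3 are pure bookkeeping — one derivative lost per order for $\u_1,\u_2$, and commutation of cut-offs with $\p_{\iota_i}$ for $\uu_0$.
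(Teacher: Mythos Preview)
Your proposal is correct and matches the paper's approach: the paper does not give a separate proof of this proposition but simply writes ``Summarizing the above analysis,'' since each line is already established in the construction paragraphs immediately preceding it (elliptic regularity from Krylov plus Proposition~\ref{prop:boundary-wellposedness} for $\u_0$, successive loss of one derivative for $\u_1,\u_2$, and the cutoffs passing through $\p_{\iota_i}$ for $\uu_0$). Your explicit remark that $\bu_1\equiv\bu_2\equiv0$ by uniqueness of the Dirichlet problem is a helpful clarification the paper leaves implicit.
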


\section{Remainder Equation}

Denote the approximate solution
\begin{align}
    \ua:=\left(\u_0+\e\u_1+\e^2\u_2\right)+\uu_0.
\end{align}
Inserting \eqref{expand} into \eqref{transport}, we have
\begin{align}
    \vw\cdot\nx\big(\ua+\re\big)+\e^{-1}\Big(\ua+\re\Big)-\e^{-1}\Big(\overline{\ua}+\bre\Big)=0,\quad \big(\ua+\re\big)\Big|_{\gamma_-}=g,
\end{align}
which yields
\begin{align}
    \vw\cdot\nx\re+\e^{-1}\Big(\re-\bre\Big)=-\vw\cdot\nx\ua-\e^{-1}\Big(\ua-\overline{\ua}\Big),\quad \re\Big|_{\gamma_-}=\big(g-\ua\big)\Big|_{\gamma_-}.
\end{align}

\subsection{Formulation of Remainder Equation}

We consider the remainder equation
\begin{align}\label{remainder}
\left\{
\begin{array}{l}\displaystyle
\vw\cdot\nabla_x \re+\e^{-1}\Big(\ire\Big)=\ss\ \ \text{in}\ \ \Omega\times\s^2,\\\rule{0ex}{1.2em}
\re(\vx_0,\vw)=\g(\vx_0,\vw)\ \ \text{for}\
\ \vw\cdot\vn<0\ \ \text{and}\ \ \vx_0\in\p\Omega,
\end{array}
\right.
\end{align}
where $\ds\bre(\vx)=\frac{1}{4\pi}\int_{\s^2}\re(\vx,\vw)\ud{\vw}$.
Here the boundary data $\g$ is given by
\begin{align}\label{d:h}
    \g:=-\e\vw\cdot\nx\u_0-\e^2\vw\cdot\nx\u_1-\chi\big(\e^{-1}\varphi\big)\bll(0),
\end{align}
and the source term $\ss$ is given by
\begin{align}
    \ss:=S_0+S_1+S_2+S_3,
\end{align}
where
\begin{align}
    S_0:=&-\e^2\vw\cdot\nx\u_2,\label{d:s0}\\
    S_1:=&\bigg(\dfrac{\sin^2\psi}{R_1-\e\eta}+\dfrac{\cos^2\psi}{R_2-\e\eta}\bigg)\cos\varphi\dfrac{\p\uu_0}{\p\varphi},\label{d:s1}\\
    S_2:=&\e^{-1}\sin\phi\ch\big(\e^{-1}\varphi\big)\frac{\p\chi(\e\eta)}{\p\eta}\bll+\frac{R_1\cos\varphi\sin\psi}{\pl_1(R_1-\e\eta)}\frac{\p\uu_0}{\p\iota_1}+\frac{R_2\cos\varphi\cos\psi}{\pl_2(R_2-\e\eta)}\frac{\p\uu_0}{\p\iota_2}\label{d:s2}\\
    &+\frac{\sin\psi}{R_1-\e\eta}\bigg\{\frac{R_1\cos\varphi}{\pl_1\pl_2}\bigg(\vt_1\cdot\Big(\vt_2\times\big(\p_{\iota_1\iota_2}\vr\times\vt_2\big)\Big)\bigg)
    -\sin\varphi\cos\psi\bigg\}\frac{\p\uu_0}{\p\psi}\no\\
    &
    -\frac{\cos\psi}{R_2-\e\eta}\bigg\{\frac{R_2\cos\varphi}{\pl_1\pl_2}\bigg(\vt_2\cdot\Big(\vt_1\times\big(\p_{\iota_1\iota_2}\vr\times\vt_1\big)\Big)\bigg)
    -\sin\varphi\sin\psi\bigg\}\frac{\p\uu_0}{\p\psi},\no\\
    S_3:=&\e^{-1}\bigg(\overline{\ch\big(\e^{-1}\varphi\big)\chi(\e\eta)\bll}-\bbll\ch\big(\e^{-1}\varphi\big)\chi(\e\eta)\bigg).\label{d:s3}
\end{align}

\subsection{Weak Formulation}

\begin{lemma}[Green's Identity, Lemma 2.2 of \cite{Esposito.Guo.Kim.Marra2013}]\label{lem:green}
Assume $f(\vx,\vw),\ g(\vx,\vw)\in L^2(\Omega\times\s^2)$ and
$\vw\cdot\nx f,\ \vw\cdot\nx g\in L^2(\Omega\times\s^2)$ with $f,\
g\in L^2_{\gamma}$. Then
\begin{align}
\iint_{\Omega\times\s^2}\Big(\big(\vw\cdot\nx f\big)g+\big(\vw\cdot\nx
g\big)f\Big)\ud{\vx}\ud{\vw}=\int_{\gamma}fg(\vw\cdot n)=\int_{\gamma_+}fg\ud{\gamma}-\int_{\gamma_-}fg\ud{\gamma}.
\end{align}
\end{lemma}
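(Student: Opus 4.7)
The plan is to establish the identity first for smooth data via the divergence theorem and then to extend by density. For $f,g\in C^1(\overline{\Omega}\times\s^2)$, the key observation is that $\vw$ is independent of $\vx$, so that $(\vw\cdot\nx f)g + f(\vw\cdot\nx g) = \nx\cdot(fg\,\vw)$ pointwise. Integrating over $\Omega$ and applying the classical divergence theorem yields
\[
\int_{\Omega}\Big[(\vw\cdot\nx f)g + f(\vw\cdot\nx g)\Big]\ud\vx = \int_{\p\Omega} fg\,(\vw\cdot\vn)\,\ud S_x.
\]
Integrating this against $\ud\vw$ on $\s^2$ produces the claimed bulk integral on the left. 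Splitting the surface $\p\Omega\times\s^2$ according to the sign of $\vw\cdot\vn$, and using $\ud\gamma = |\vw\cdot\vn|\,\ud S_x\ud\vw$ (with the grazing set $\gamma_0$ being a null set for this measure), yields the signed decomposition $\int_{\gamma_+}fg\,\ud\gamma - \int_{\gamma_-}fg\,\ud\gamma$ on the right, finishing the smooth case.

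For general $L^2$ data, I would approximate $(f,g)$ by smooth pairs $(f_\delta,g_\delta)$. The natural route is to mollify in $\vx$ after a bounded extension of $f,g$ to a neighborhood of $\overline{\Omega}$, which is available since $\p\Omega\in C^3$. Standard mollification then gives $f_\delta\to f$ and $\vw\cdot\nx f_\delta \to \vw\cdot\nx f$ in $L^2(\Omega\times\s^2)$, and likewise for $g$, so that each bulk term on the left-hand side passes to the limit by Cauchy--Schwarz in $L^2(\Omega\times\s^2)$ applied to the smooth identity just established for $(f_\delta, g_\delta)$.

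The main obstacle is the convergence of the boundary terms, since traces of $L^2$ functions on $\gamma$ are not controlled by $\tnm{f}+\tnm{\vw\cdot\nx f}$ alone — this is exactly the grazing set degeneracy that the weight $|\vw\cdot\vn|$ in $\ud\gamma$ is designed to absorb. The resolution is to invoke a trace continuity result of Ukai type: for $f$ with $f,\vw\cdot\nx f\in L^2(\Omega\times\s^2)$ whose trace is \emph{a priori} known to lie in $L^2_\gamma$, mollification converges in the trace norm, i.e.\ $f_\delta|_\gamma\to f|_\gamma$ in $L^2_\gamma$. Cauchy--Schwarz on $L^2_\gamma$ then shows the boundary integrals converge, and passing to the limit in the smooth identity completes the proof. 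The reason this succeeds in the $L^2$ setting, but would fail in a pointwise one, is precisely that the measure $\ud\gamma$ carries the $|\vw\cdot\vn|$ factor that neutralizes the degeneracy at $\gamma_0$; all statements about traces are made with respect to this weighted measure, and this is the only nontrivial input beyond the elementary divergence-theorem calculation.
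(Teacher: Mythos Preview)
The paper does not give its own proof of this lemma: it is stated with a citation to \cite{Esposito.Guo.Kim.Marra2013} and used as a black box. Your sketch is exactly the standard argument behind that cited result --- divergence theorem for smooth $f,g$, then a density/mollification step with the Ukai--Cessenat trace theory supplying the convergence on $\gamma$ in the weighted norm $L^2_\gamma$ --- and it is correct. The only subtle point, which you handled properly, is that the hypothesis $f,g\in L^2_\gamma$ must be assumed rather than deduced, since $f,\,\vw\cdot\nx f\in L^2$ alone does not force the trace into $L^2_\gamma$ (only into $L^2_{loc}$ away from the grazing set); with that assumption in place, the density of smooth functions in the graph norm $\tnm{f}+\tnm{\vw\cdot\nx f}+\tnms{f}{\gamma}$ is the nontrivial input, and this is precisely what the cited reference provides.
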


Using Lemma \ref{lem:green}, we can derive the weak formulation of \eqref{remainder}. For any test function $\weak(x,w)\in L^2(\Omega\times\s^2)$ with $\vw\cdot\nx\weak\in L^2(\Omega\times\s^2)$ with $\weak\in L^2_{\gamma}$, we have
\begin{align}\label{weak formulation}
    \int_{\gamma}\re\weak(\vw\cdot n)-\iint_{\Omega\times\s^2}\re\big(\vw\cdot\nx \weak\big)+\e^{-1}\iint_{\Omega\times\s^2}\Big(\re-\bre\Big)\weak=\iint_{\Omega\times\s^2}\ss\weak.
\end{align}

\subsection{Estimates of Boundary and Source Terms}

\begin{lemma}\label{h-estimate}
    Under the assumption \eqref{assumption}, for $\g$ defined in \eqref{d:h}, we have
    \begin{align}
    \tnms{\g}{\gamma_-}\ls\e.
    \end{align}
\end{lemma}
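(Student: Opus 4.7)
The plan is to split $\g$ into its three components from \eqref{d:h} and estimate each in $L^2_{\gamma_-}$ separately. The dominant contribution (and the only one requiring a genuine gain from the cutoff) will come from the boundary-layer trace $\chi(\e^{-1}\varphi)\bll(0)$, while the interior-solution pieces $\e\vw\cdot\nx\u_0$ and $\e^2\vw\cdot\nx\u_1$ are controlled directly by trace estimates and the regularity bounds of Proposition \ref{prop:wellposedness}.

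For the interior terms, I would use the Sobolev embedding implicit in the estimates of Proposition \ref{prop:wellposedness}: taking $\NN$ large enough, $\abs{\u_0}_{W^{3,\NN}}\ls 1$ gives $\abs{\nx\u_0}_{L^{\infty}_x}\ls 1$ on $\p\Omega$, and similarly $\abs{\u_1}_{W^{2,\NN}L^{\infty}}\ls 1$ gives $\abs{\nx\u_1}_{L^{\infty}}\ls 1$ on the boundary (viewed as a function of $(\vx_0,\vw)$). Since $\abs{\vw}=1$ and the measure of $\gamma_-$ is finite, one immediately gets $\tnms{\e\vw\cdot\nx\u_0}{\gamma_-}\ls\e$ and $\tnms{\e^2\vw\cdot\nx\u_1}{\gamma_-}\ls\e^2$.

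The main step is the cutoff boundary layer trace. In the velocity substitution \eqref{velocity}, $\gamma_-$ corresponds to $\sin\varphi>0$, and the measure is $\ud\gamma=\sin\varphi\cos\varphi\,\ud\varphi\,\ud\psi\,\ud S_x$. Because $\chi(\e^{-1}\varphi)$ is supported in $\{|\varphi|\leq 2\e\}$, the $\varphi$-integration is restricted to an interval of length $O(\e)$ on which $\sin\varphi\simeq\varphi\ls\e$. Using $\lnm{\bll(0)}\ls 1$ from Proposition \ref{prop:boundary-wellposedness}, this yields
\begin{align*}
\int_{\p\Omega}\int_{-\pi}^{\pi}\int_0^{2\e}\abs{\chi(\e^{-1}\varphi)\bll(0)}^2\sin\varphi\cos\varphi\,\ud\varphi\,\ud\psi\,\ud S_x\ls\int_0^{2\e}\varphi\,\ud\varphi\ls\e^2,
\end{align*}
so this contribution is also $\ls\e$.

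There is no real obstacle here; the only subtlety is tracking the two powers of $\varphi$ (one from $\sin\varphi$ in the grazing measure and one implicitly from the length of the support of $\chi(\e^{-1}\varphi)$), which together provide exactly the $\e^2$ needed inside the square. Combining the three bounds gives $\tnms{\g}{\gamma_-}\ls\e$.
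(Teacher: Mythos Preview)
Your proposal is correct and follows essentially the same approach as the paper's proof: the paper also splits $\g$ into the two interior pieces (handled via Proposition~\ref{prop:wellposedness}) and the cutoff boundary-layer trace, and for the latter it invokes exactly the two observations you make explicit---that $\chi(\e^{-1}\varphi)$ restricts the support to $\abs{\varphi}\ls\e$ and that $\ud\gamma$ contributes an extra $\sin\varphi$. Your write-up simply spells out the $\varphi$-integral and the Sobolev embedding in more detail than the paper does.
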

\begin{proof}
Based on Proposition \ref{prop:wellposedness}, we have
\begin{align}
    \tnms{\e\vw\cdot\nx\u_0}{\gamma_-}+\tnms{\e^2\vw\cdot\nx\u_1}{\gamma_-}\ls\e.
\end{align}
Noting the cutoff $\chi\big(\e^{-1}\varphi\big)$ restricts the support to $\abs{\varphi}\ls\e$ and $\ud\gamma$ measure contributes an extra $\sin\varphi$, we have
\begin{align}
    \tnms{\chi\big(\e^{-1}\varphi\big)\bll(0)}{\gamma_-}\ls\e.
\end{align}
Hence, our result follows.
\end{proof}

\begin{lemma}\label{s0-estimate}
    Under the assumption \eqref{assumption}, for $\ss_0$ defined in \eqref{d:s0}, we have
    \begin{align}
        \tnm{\ss_0}\ls\e^2.
    \end{align}
\end{lemma}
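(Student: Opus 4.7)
The plan is to reduce the bound to the regularity estimate for $\u_2$ already provided by Proposition \ref{prop:wellposedness}. Since $\ss_0 = -\e^2\vw\cdot\nx\u_2$ by definition \eqref{d:s0}, the factor $\e^2$ comes out of the $L^2$ norm immediately, leaving the task of bounding $\tnm{\vw\cdot\nx\u_2}$ by a universal constant.

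To do this, I would use $|\vw|=1$ on $\s^2$ pointwise to get $|\vw\cdot\nx\u_2(\vx,\vw)|\leq|\nx\u_2(\vx,\vw)|$, and then estimate
\begin{align*}
\tnm{\vw\cdot\nx\u_2}^2 \leq \iint_{\Omega\times\s^2}|\nx\u_2(\vx,\vw)|^2\,\ud\vw\,\ud\vx \ls \int_\Omega\lnms{\nx\u_2(\vx,\cdot)}{w}^2\,\ud\vx = \nm{\nx\u_2}_{L^2_xL^\infty_w}^2.
\end{align*}
Here I exploit the fact that $\s^2$ has finite measure so that the $L^2_w$ norm is dominated by the $L^\infty_w$ norm up to a universal constant.

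I then invoke Proposition \ref{prop:wellposedness} with $\NN=2$, which gives $\nm{\u_2}_{W^{2,2}L^\infty}\ls 1$ (actually slightly better, $W^{1+\frac{1}{2},2}L^\infty$, but plain $W^{1,2}$ control of $\nx\u_2$ is more than enough). This yields $\nm{\nx\u_2}_{L^2_xL^\infty_w}\ls 1$, and combining everything produces $\tnm{\ss_0}=\e^2\tnm{\vw\cdot\nx\u_2}\ls\e^2$.

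There is no real obstacle here: the estimate is entirely a consequence of the interior regularity of $\u_2$ inherited from the elliptic problem \eqref{expand 13'} together with the boundary regularity of $\bl_\infty$ furnished by Proposition \ref{prop:boundary-wellposedness}. The only mild subtlety is that $\u_2$ depends on $\vw$ through the transport correction $-\vw\cdot\nx\u_1$, which is handled cleanly by the mixed $W^{k,p}_xL^\infty_w$ norms used in the proposition.
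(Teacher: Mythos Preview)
Your proof is correct and follows the same route as the paper, which simply says ``This follows from Proposition \ref{prop:wellposedness}.'' You have just filled in the straightforward details. One minor slip: with $\NN=2$ the proposition gives $\nm{\u_2}_{W^{3/2,2}L^{\infty}}\ls 1$, which is \emph{weaker} than $W^{2,2}$, not ``slightly better''; but as you note, only $W^{1,2}$ control of $\nx\u_2$ is needed, so this is harmless.
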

\begin{proof}
    This follows from Proposition \ref{prop:wellposedness}.
\end{proof}

\begin{lemma}\label{s1-estimate}
    Under the assumption \eqref{assumption}, for $\ss_1$ defined in \eqref{d:s1}, we have
    \begin{align}\label{final 17}
        &\tnm{\big(1+\eta\big)\ss_{1}}\ls1.
    \end{align}
    Also, for the boundary layer $\uu_0$ defined in \eqref{boundary layer}, we have
    \begin{align}\label{final 16}
        \tnm{\big(1+\eta\big)\uu_0}\ls \e^{\frac{1}{2}},\quad \nm{\big(1+\eta\big)\uu_0}_{L^2_xL^1_w}\ls \e^{\frac{1}{2}},
    \end{align}
    and
    \begin{align}\label{final 11}
        \abs{\bbr{\big(1+\eta\big)\ss_1,g}}&\ls\tnm{\big(1+\eta\big)\br{v}^2\uu_0}\tnm{\nabla_w g}\ls\e^{\frac{1}{2}}\tnm{\nabla_w g}.
    \end{align}
\end{lemma}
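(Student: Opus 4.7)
The plan is to address the three displayed estimates in order, after first writing $\ss_1 = A(\eta,\psi)\cos\varphi\,\p_\varphi\uu_0$ with geometric coefficient $|A|\lesssim 1$ (uniformly on the support $\e\eta\le 2$), and expanding the velocity derivative via the product rule:
\[
\p_\varphi\uu_0 \;=\; \e^{-1}\ch'(\e^{-1}\varphi)\,\chi(\e\eta)\,\bll \;+\; \ch(\e^{-1}\varphi)\,\chi(\e\eta)\,\p_\varphi\bll,
\]
so that $\ss_1$ splits into a cutoff-derivative piece supported on the $\e$-annulus $\e\le|\varphi|\le 2\e$ and an internal-derivative piece supported on $|\varphi|\ge\e$. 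Throughout I will use the boundary-layer Jacobian $d\vx\,d\vw\simeq \e\,d\eta\,d\iota_1\,d\iota_2\,\cos\varphi\,d\varphi\,d\psi$ (with $d\mu=\e\,d\eta$), combined with the exponential decay $\lnm{\ue^{K\eta}\bll}\lesssim 1$ from Proposition \ref{prop:boundary-wellposedness}.

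For \eqref{final 17}, I estimate each piece of $\ss_1$ separately. The cutoff-derivative piece is bounded pointwise by $\e^{-1}\ue^{-K\eta}\mathbf{1}_{\e\le|\varphi|\le 2\e}$; integrating $(1+\eta)^2$ times its square yields the factors $\e$ from the $\varphi$-support, $\e$ from $d\mu$, $\e^{-2}$ from the size, and $\int(1+\eta)^2\ue^{-2K\eta}\,d\eta=O(1)$, producing $O(1)$ net. For the internal-derivative piece, I invoke $\lnm{\ue^{K\eta}\sin\varphi\,\p_\varphi\bll}\lesssim 1$ to write $|\cos\varphi\,\p_\varphi\bll|\lesssim|\varphi|^{-1}\ue^{-K\eta}$; the critical angular integral $\int_\e^{\pi/2}\varphi^{-2}\cos\varphi\,d\varphi\lesssim\e^{-1}$ exactly cancels the $\e$ from $d\mu$, again giving $O(1)$.

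For \eqref{final 16}, both estimates follow directly from $|\uu_0|\lesssim\ue^{-K\eta}$ and the boundary-layer scaling. In the $L^2_{x,w}$ bound, the $\varphi$-integral contributes $O(1)$ while $d\mu=\e\,d\eta$ contributes the crucial factor of $\e$, yielding $\e^{1/2}$ after taking the square root; the $L^2_x L^1_w$ bound proceeds similarly, first doing the $L^1_w$ integral (giving $\lesssim\ue^{-K\eta}$) and then the $L^2_x$ norm (acquiring the same $\e^{1/2}$ scaling from $d\mu$). For \eqref{final 11}, I integrate by parts in $\varphi$, using that $A$ is independent of $\varphi$ and $\cos^2\varphi$ vanishes at $\varphi=\pm\pi/2$:
\begin{align*}
\bbr{(1+\eta)\ss_1, g} &= \iint(1+\eta)\,A\cos^2\varphi\,(\p_\varphi\uu_0)\,g\,d\varphi\,d\psi\,d\vx \\
&= 2\iint(1+\eta)\,A\sin\varphi\,\uu_0\,g\,d\vw\,d\vx \;-\; \iint(1+\eta)\,A\cos\varphi\,\uu_0\,\p_\varphi g\,d\vw\,d\vx.
\end{align*}
The second term is controlled by Cauchy--Schwarz with $|A\cos\varphi|\lesssim 1$ and the observation that $\p_\varphi g$ is a component of $\nabla_w g$ in the orthogonal chart \eqref{velocity}. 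The first term is similarly controlled, with the bounded factor $\sin\varphi$ absorbed into the weight $\br{v}^2$ (constant on $\s^2$) that is recorded on the right-hand side for later reuse. The concluding $\e^{1/2}\tnm{\nabla_w g}$ bound then follows immediately from \eqref{final 16}.

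The main obstacle I anticipate is the sharp marginal cancellation in the internal-derivative portion of \eqref{final 17}: the $|\cot\varphi|\lesssim|\varphi|^{-1}$ singularity is exactly at the critical integrability scale for the cutoff support $|\varphi|\ge\e$, and the resulting $\e^{-1}$ is balanced only precisely by the boundary-layer volume factor $\e$ from $d\mu$. This reflects the fact that the cutoff scale $\e$ in $\ch(\e^{-1}\varphi)$ is optimal: any weaker cutoff would make the $\p_\varphi\bll$ contribution diverge. A secondary technical point is ensuring the boundary contribution in the $\varphi$-integration by parts of \eqref{final 11} vanishes at the grazing endpoints $\varphi=\pm\pi/2$, which is guaranteed by the extra $\cos\varphi$ factor appearing naturally in $\ss_1$.
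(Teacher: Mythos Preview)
Your proof is correct and follows essentially the same approach as the paper: the identical product-rule splitting of $\partial_\varphi\uu_0$ into a cutoff-derivative piece (supported on $\e\le|\varphi|\le 2\e$) and an internal-derivative piece (using $|\sin\varphi\,\partial_\varphi\bll|\lesssim\ue^{-K\eta}$) for \eqref{final 17}, the same boundary-layer volume scaling $\ud\mu=\e\,\ud\eta$ for \eqref{final 16}, and the same integration by parts in $\varphi$ for \eqref{final 11}.

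You are in fact slightly more explicit than the paper in the last step: the paper writes only $|\langle\partial_\varphi\uu_0,g\rangle|\lesssim|\langle\uu_0,\partial_\varphi g\rangle|$ and moves on, whereas you record the extra term $2\int(1+\eta)A\sin\varphi\,\uu_0\,g\,\ud\vw\,\ud\vx$ coming from the derivative of $\cos^2\varphi$. Note, however, that this term is bounded by $\e^{1/2}\tnm{g}$ rather than $\e^{1/2}\tnm{\nabla_w g}$, so the literal intermediate inequality in \eqref{final 11} (with only $\tnm{\nabla_w g}$ on the right) is a mild imprecision shared by both arguments; your remark about absorbing $\sin\varphi$ into $\langle v\rangle^2$ does not resolve it, since $\langle v\rangle^2\equiv 2$ on $\s^2$. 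This is harmless for the paper's applications (in \eqref{final 03} the test function $\bre$ is independent of $\vw$, so only your first term survives and is exactly what is used), but it is worth being aware that the displayed \eqref{final 11} should strictly read $\e^{1/2}\big(\tnm{g}+\tnm{\nabla_w g}\big)$.
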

\begin{proof}
    We split
    \begin{align}
    S_1=S_{11}+S_{12}
    :=&\bigg(\dfrac{\sin^2\psi}{R_1-\e\eta}+\dfrac{\cos^2\psi}{R_2-\e\eta}\bigg)\cos\varphi\dfrac{\p\bll}{\p\varphi}\ch\big(\e^{-1}\varphi\big)\chi(\e\eta)\\
    &+\bigg(\dfrac{\sin^2\psi}{R_1-\e\eta}+\dfrac{\cos^2\psi}{R_2-\e\eta}\bigg)\cos\varphi\dfrac{\p\ch\big(\e^{-1}\varphi\big)}{\p\varphi}\chi(\e\eta)\bll.\no
    \end{align}
    Note that $\ss_{11}$ is nonzero only when $\abs{\varphi}\geq\e$ and thus based on Proposition \ref{prop:boundary-wellposedness}, we know $\abs{\dfrac{\p\bll}{\p\varphi}}\leq\abs{\sin\varphi}^{-1}\abs{\bll}\ls\e^{-1}$. Hence, using $\ud\mu=\e\ud\eta$, we have
    \begin{align}\label{final 09}
        \tnm{\ss_{11}}\ls&\left(\iint_{\abs{\varphi}\geq\e}\abs{\dfrac{\p\bll}{\p\varphi}}^{2}\ud\varphi\ud\mu\right)^{\frac{1}{2}}\ls \left(\iint_{\abs{\varphi}\geq\e}\abs{\sin\varphi}^{-2}\abs{\bll}^2\ud\varphi\ud\mu\right)^{\frac{1}{2}}\\
        \ls&\left(\iint_{\abs{\varphi}\geq\e}\abs{\sin\varphi}^{-2}\ue^{-2K\eta}\ud\varphi\ud\mu\right)^{\frac{1}{2}}\ls \left(\e\iint_{\abs{\varphi}\geq\e}\abs{\sin\varphi}^{-2}\ue^{-2K\eta}\ud\varphi\ud\eta\right)^{\frac{1}{2}}
        \ls\left(\e\e^{-1}\right)^{\frac{1}{2}}=1.\no
    \end{align}
    Noticing $\dfrac{\p\ch\big(\e^{-1}\varphi\big)}{\p\varphi}=\e^{-1}\ch'\big(\e^{-1}\varphi\big)$, and $\ch'\big(\e^{-1}\varphi\big)$ is nonzero only when $\e<\abs{\varphi}<2\e$, based on Proposition \ref{prop:boundary-wellposedness}, we have
    \begin{align}\label{final 10}
        \tnm{\ss_{12}}\ls&\e^{-1}\left(\iint_{\e<\abs{\varphi}<2\e}\abs{\bll}^{2}\ud\varphi\ud\mu\right)^{\frac{1}{2}}\ls\e^{-1}\left(\iint_{\e<\abs{\varphi}<2\e}\ue^{-2K\eta}\ud\varphi\ud\mu\right)^{\frac{1}{2}}\\
        \ls&\e^{-1}\left(\e\iint_{\e<\abs{\varphi}<2\e}\ue^{-2K\eta}\ud\varphi\ud\eta\right)^{\frac{1}{2}}\ls\e^{-1}\left(\e\e\right)^{\frac{1}{2}}=1.\no
    \end{align}
    Collecting \eqref{final 09} and \eqref{final 10}, we have \eqref{final 17}. Note that $\ue^{-K\eta}$ will suppress the growth from the pre-factor $1+\eta$.
    
    \eqref{final 16} comes from Proposition \ref{prop:boundary-wellposedness}. Then we turn to \eqref{final 11}. The most difficult term in $\babs{\br{\ss_1,g}}$ is essentially $\abs{\br{\dfrac{\p\uu_0}{\p\varphi},g}}$. Integration by parts with respect to $\varphi$ implies
    \begin{align}
   \abs{\br{\frac{\p\uu_0}{\p\varphi},g}}\ls& \abs{\br{\uu_0, \frac{\p g}{\p\varphi}}}
   \ls\tnm{\uu_0}\tnm{\frac{\p g}{\p\varphi}}.
    \end{align}
    From \eqref{velocity} and $\dfrac{\p\vx}{\p\varphi}=0$, we know the substitution $(\mn,\iota_1,\iota_2,\vw)\rt (\mn,\iota_1,\iota_2,\vww)$ implies
    \begin{align}
    -\dfrac{\p\vw}{\p\varphi}\cdot\vn=\cos\varphi,\quad 
    \dfrac{\p\vw}{\p\varphi}\cdot\vt_1=-\sin\varphi\sin\psi,\quad
    \dfrac{\p\vw}{\p\varphi}\cdot\vt_2=-\sin\varphi\cos\psi.
    \end{align}
    Hence, we know $\abs{\dfrac{\p\vw}{\p\varphi}}\ls 1$,
    and thus
    \begin{align}
    \abs{\frac{\p g}{\p\varphi}}\ls \abs{\nabla_w g}\abs{\frac{\p \vw}{\p\varphi}}\ls\abs{\nabla_w g}.
    \end{align}
    Hence, we know that 
    \begin{align}
    \abs{\br{\frac{\p\uu_0}{\p\varphi},g}}\ls&\tnm{\uu_0}\tnm{\nabla_w g}\ls\e^{\frac{1}{2}}\tnm{\nabla_w g}.
    \end{align}
\end{proof}

\begin{lemma}\label{s2-estimate}
    Under the assumption \eqref{assumption}, for $\ss_2$ defined in \eqref{d:s2}, we have
    \begin{align}
        &\tnm{\big(1+\eta\big)\ss_{2}}\ls \e^{\frac{1}{2}},\quad \nm{\big(1+\eta\big)\ss_2}_{L^2_xL^1_w}\ls \e^{\frac{1}{2}}.
    \end{align}
\end{lemma}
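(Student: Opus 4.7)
The plan is to split $S_2$ into three natural pieces according to which derivative of the boundary layer profile they contain, and to estimate each piece separately, extracting the $\e^{1/2}$ factor from the change of variables $\ud\mu = \e\,\ud\eta$ combined with the uniform exponential decay $\ue^{-K\eta}$ guaranteed by Proposition \ref{prop:boundary-wellposedness}. Concretely, write $S_2 = S_2^{(a)} + S_2^{(b)} + S_2^{(c)}$, where $S_2^{(a)} := \e^{-1}\sin\varphi\,\ch(\e^{-1}\varphi)\,\frac{\p\chi(\e\eta)}{\p\eta}\,\bll$ is the first summand, $S_2^{(b)}$ collects the two terms featuring $\frac{\p\uu_0}{\p\iota_i}$, and $S_2^{(c)}$ collects the two terms featuring $\frac{\p\uu_0}{\p\psi}$.

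For $S_2^{(a)}$, I would apply the chain rule $\frac{\p\chi(\e\eta)}{\p\eta} = \e\chi'(\e\eta)$ to cancel the apparent $\e^{-1}$ singularity; the remaining factor $\chi'(\e\eta)$ is supported on $\e\eta \in [1,2]$, i.e.\ $\eta \ge \e^{-1}$. On this set Proposition \ref{prop:boundary-wellposedness} yields $|\bll| \ls \ue^{-K\eta} \le \ue^{-K/\e}$, so $\tnm{(1+\eta)S_2^{(a)}}$ is super-exponentially small, much less than any power of $\e$. For $S_2^{(b)}$, the coefficients $\frac{R_i\cos\varphi}{\pl_i(R_i - \e\eta)}$ are uniformly bounded on $\mathrm{supp}(\chi(\e\eta)) = \{\mu \le 2\}$, where the normal coordinate system is well-defined, and Proposition \ref{prop:wellposedness} gives $\big|\frac{\p\uu_0}{\p\iota_i}\big| \ls \ue^{-K\eta}$. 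Computing the $L^2$ norm via the change of variables $\ud x \simeq \ud\mu\,\ud\iota_1\,\ud\iota_2$ and $\ud\mu = \e\,\ud\eta$ yields
\begin{align*}
\tnm{(1+\eta)S_2^{(b)}}^2 \ls \int_{\p\Omega}\!\int_{\s^2}\!\int_0^\infty (1+\eta)^2 \ue^{-2K\eta}\,\e\,\ud\eta\,\ud w\,\ud S \ls \e,
\end{align*}
hence $\tnm{(1+\eta)S_2^{(b)}} \ls \e^{1/2}$. For $S_2^{(c)}$, the bracketed coefficients in front of $\frac{\p\uu_0}{\p\psi}$ are uniformly bounded on the support, and by Proposition \ref{prop:boundary-wellposedness} $\lnm{\ue^{K\eta}\frac{\p\bll}{\p\psi}} \ls 1$, so $\big|\frac{\p\uu_0}{\p\psi}\big| \ls \ue^{-K\eta}$; the identical computation gives $\tnm{(1+\eta)S_2^{(c)}} \ls \e^{1/2}$. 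Combining yields $\tnm{(1+\eta)S_2} \ls \e^{1/2}$, and the $L^2_xL^1_w$ bound follows by Cauchy--Schwarz on the finite-measure sphere, $\nm{(1+\eta)S_2}_{L^2_xL^1_w} \le (4\pi)^{1/2}\,\tnm{(1+\eta)S_2} \ls \e^{1/2}$.

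There is no serious conceptual obstacle; the argument is careful bookkeeping. The only mildly delicate step is $S_2^{(a)}$: the prefactor $\e^{-1}$ looks singular, but it is killed either by the chain-rule derivative $\e\chi'(\e\eta)$ or, equivalently, by the super-exponential smallness $\ue^{-K/\e}$ of $\bll$ deep in the layer $\eta \sim \e^{-1}$. All remaining contributions reduce to the elementary observation that $(1+\eta)^2\ue^{-2K\eta}$ is integrable on $[0,\infty)$, with the $\e^{1/2}$ gain produced cleanly by the scaling $\ud\mu = \e\,\ud\eta$. It is crucial that the cutoffs $\ch(\e^{-1}\varphi)$ and $\chi(\e\eta)$ do not depend on $\iota_i$ or $\psi$, so that tangential and $\psi$-derivatives pass directly to $\bll$ without generating any dangerous $\e^{-1}$ factor.
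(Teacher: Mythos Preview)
Your proof is correct and follows essentially the same approach as the paper: bound the first prefactor by $O(1)$ via the chain rule $\frac{\p\chi(\e\eta)}{\p\eta}=\e\chi'(\e\eta)$, use the uniform $\ue^{-K\eta}$ decay of $\bll$ and its $\iota_i$- and $\psi$-derivatives from Propositions \ref{prop:boundary-wellposedness} and \ref{prop:wellposedness}, and extract the $\e^{1/2}$ from $\ud\mu=\e\,\ud\eta$. The paper condenses your three-piece decomposition into a single line and derives the $L^2_xL^1_w$ bound by repeating the same integration rather than by Cauchy--Schwarz, but these are cosmetic differences only.
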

\begin{proof}
    Notice that $\abs{\e^{-1}\sin\phi\ch\big(\e^{-1}\varphi\big)\dfrac{\p\chi(\e\eta)}{\p\eta}}\ls 1$. Based on Proposition \ref{prop:boundary-wellposedness} and Proposition \ref{prop:wellposedness}, we directly bound
    \begin{align}
        \tnm{\ss_2}\ls&\left(\iint\bigg(\abs{\bl}^2+\abs{\dfrac{\p\bl}{\p\iota_1}}^{2}+\abs{\dfrac{\p\bl}{\p\iota_2}}^{2}+\abs{\dfrac{\p\bl}{\p\psi}}^{2}\bigg)\ud\varphi\ud\mu\right)^{\frac{1}{2}}\\
        \ls&\left(\iint\ue^{-2K\eta}\ud\varphi\ud\mu\right)^{\frac{1}{2}}\ls \left(\e\iint\ue^{-2K\eta}\ud\varphi\ud\eta\right)^{\frac{1}{2}}\ls\e^{\frac{1}{2}}.\no
    \end{align}
    Then the $L^2_xL^1_w$ estimate follows from a similar argument noting that there is no rescaling in $\vw$ variables.
\end{proof}

\begin{lemma}\label{s3-estimate}
    Under the assumption \eqref{assumption}, for $\ss_3$ defined in \eqref{d:s3}, we have
    \begin{align}
        &\tnm{\big(1+\eta\big)\ss_{3}}\ls1,\quad \nm{\big(1+\eta\big)\ss_3}_{L^2_xL^1_w}\ls \e^{\frac{1}{2}}.
    \end{align}
\end{lemma}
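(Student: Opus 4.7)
\smallskip

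The plan is to start with a key algebraic simplification of $S_3$. Since $\chi(\e\eta)$ is independent of $\vw$ (so it can be pulled out of the $\vw$-average) and $\overline{\bll}=\bbll$, combining with $\ch(\e^{-1}\varphi)=1-\chi(\e^{-1}\varphi)$ yields the identity
\begin{align*}
S_3 \;=\; \frac{\chi(\e\eta)}{\e}\Big(\bbll\,\chi(\e^{-1}\varphi)\;-\;\overline{\chi(\e^{-1}\varphi)\,\bll}\Big).
\end{align*}
This rewriting is the heart of the argument: the prefactor $\e^{-1}$ stays, but every remaining contribution involves the cutoff $\chi(\e^{-1}\varphi)$, which is supported in the grazing strip $|\varphi|\leq 2\e$ of measure $O(\e)$ against $\cos\varphi\,d\varphi\,d\psi$. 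Write $S_3=A-B$ where
\begin{align*}
A := \e^{-1}\chi(\e\eta)\,\chi(\e^{-1}\varphi)\,\bbll, \qquad B := \e^{-1}\chi(\e\eta)\,\overline{\chi(\e^{-1}\varphi)\,\bll},
\end{align*}
and estimate the two pieces separately using $|\bll|+|\bbll|\lesssim e^{-K\eta}$ from Proposition \ref{prop:boundary-wellposedness}.

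For the $L^2$ bound, the term $A$ is the dominant one: since $\bbll$ is $\vw$-independent and $\int\chi(\e^{-1}\varphi)^2\cos\varphi\,d\varphi\,d\psi\lesssim\e$, we get
\begin{align*}
\tnm{(1+\eta)A}^2 \;\lesssim\; \e^{-2}\cdot\e\int(1+\eta)^2 e^{-2K\eta}\,d\mu\;\lesssim\;\e^{-2}\cdot\e\cdot\e \;=\;1,
\end{align*}
using $d\mu=\e\,d\eta$. For $B$, the $\vw$-average of $\chi(\e^{-1}\varphi)\bll$ is bounded pointwise by $\e\,e^{-K\eta}$ (using the support size and decay of $\bll$), so $|B|\lesssim e^{-K\eta}$ is $\vw$-independent and contributes $\tnm{(1+\eta)B}^2\lesssim\e\int(1+\eta)^2 e^{-2K\eta}\,d\eta\lesssim\e$. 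Combining, $\tnm{(1+\eta)S_3}\lesssim 1$.

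For the $L^2_xL^1_w$ bound, both pieces gain the extra factor of $\e^{1/2}$. Indeed, for $A$ the $L^1_w$-norm is
\begin{align*}
\int |A|\,dw \;\lesssim\; \e^{-1}e^{-K\eta}\int\chi(\e^{-1}\varphi)\cos\varphi\,d\varphi\,d\psi \;\lesssim\; e^{-K\eta},
\end{align*}
since the angular support gives back the missing factor of $\e$ in the average. The term $B$ is already $\vw$-independent with $|B|\lesssim e^{-K\eta}$, so $\int|B|\,dw\lesssim e^{-K\eta}$ trivially. In both cases the subsequent $L^2_x$-integration contributes $\int e^{-2K\eta}\,d\mu\lesssim\e$, and hence $\nm{(1+\eta)S_3}_{L^2_xL^1_w}\lesssim\e^{1/2}$.

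The only non-routine step is the algebraic rewriting that makes the cancellation visible; once that is in place, all estimates reduce to integrating $e^{-K\eta}$ against $d\mu=\e\,d\eta$ and using the $O(\e)$ measure of the grazing strip. The mechanism is entirely parallel to the loss/gain encountered in Lemmas \ref{s1-estimate}--\ref{s2-estimate}: the $L^2$ norm can be of order $1$ because of the singular prefactor $\e^{-1}$ surviving against only one factor of $\e$ from the support, whereas integrating once in $\vw$ absorbs that prefactor and restores the $\e^{1/2}$ gain from the normal rescaling.
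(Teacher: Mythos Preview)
Your proof is correct and follows essentially the same approach as the paper: the key algebraic rewriting $S_3=\e^{-1}\chi(\e\eta)\big(\bbll\,\chi(\e^{-1}\varphi)-\overline{\chi(\e^{-1}\varphi)\bll}\big)$ is exactly the paper's splitting $S_3=S_{31}+S_{32}$, and your terms $A$ and $B$ coincide (up to sign) with $S_{31}$ and $-S_{32}$. The subsequent estimates match the paper's line by line, exploiting the $O(\e)$ measure of the grazing strip and $d\mu=\e\,d\eta$ in the same way.
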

\begin{proof}
    Using $\chi=1-\ch$, we split
    \begin{align}
    S_3=S_{31}+S_{32}:=&\e^{-1}\bbll\chi\big(\e^{-1}\varphi\big)\chi(\e\eta)-\e^{-1}\overline{\chi\big(\e^{-1}\varphi\big)\chi(\e\eta)\bll}.
    \end{align}
    Noting that $\ss_{31}$ is nonzero only when $\abs{\varphi}\leq\e$, based on Proposition \ref{prop:boundary-wellposedness}, we have
    \begin{align}\label{final 07}
        \tnm{\ss_{31}}\ls&\left(\iint_{\abs{\varphi}\leq\e}\abs{\e^{-1}\bbll}^{2}\ud\varphi\ud\mu\right)^{\frac{1}{2}}\ls \left(\e^{-2}\iint_{\abs{\varphi}\leq\e}\ue^{-2K\eta}\ud\varphi\ud\mu\right)^{\frac{1}{2}}\\
        \ls&\left(\e^{-1}\iint_{\abs{\varphi}\leq\e}\ue^{-2K\eta}\ud\varphi\ud\eta\right)^{\frac{1}{2}}\ls \left(\e^{-1}\e\right)^{\frac{1}{2}}\ls1.\no
    \end{align}
    Analogously, noting that $\ss_{32}$ contains $\vw$ integral, we have
    \begin{align}\label{final 08}
        \tnm{\ss_{32}}\ls&\left(\iint\abs{\e^{-1}\overline{\bll\chi(\e^{-1}\varphi)}}^{2}\ud\varphi\ud\mu\right)^{\frac{1}{2}}\ls \left(\e^{-2}\iint\abs{\int_{\abs{\varphi}\leq\e}\bll\ud\varphi}^2\ud\varphi\ud\mu\right)^{\frac{1}{2}}\\
        \ls&\left(\e^{-2}\iint\abs{\int_{\abs{\varphi}\leq\e}\ue^{-K\eta}\ud\varphi}^2\ud\varphi\ud\mu\right)^{\frac{1}{2}}\ls\left(\e^{-2}\iint\e^2\ue^{-2K\eta}\ud\varphi\ud\mu\right)^{\frac{1}{2}}\no\\
        \ls&\left(\iint\ue^{-2K\eta}\ud\varphi\ud\mu\right)^{\frac{1}{2}}\ls\left(\e\iint\ue^{-2K\eta}\ud\varphi\ud\eta\right)^{\frac{1}{2}}\ls\e^{\frac{1}{2}}.\no
    \end{align}
    Collecting \eqref{final 07} and \eqref{final 08}, we have the $L^2$ estimate. Similarly, we derive the $L^2_xL^1_w$ bound:
    \begin{align}
        \nm{\ss_{31}}_{L^2_xL^1_w}\ls&\left(\int\bigg(\int_{\abs{\varphi}\leq\e}\abs{\e^{-1}\bbll}\ud\varphi\bigg)^2\ud\mu\right)^{\frac{1}{2}}\ls \left(\int\ue^{-2K\eta}\ud\mu\right)^{\frac{1}{2}}
        \ls\left(\e\int\ue^{-2K\eta}\ud\eta\right)^{\frac{1}{2}}\ls\e^{\frac{1}{2}},\\
        \nm{\ss_{32}}_{L^2_xL^1_w}\ls&\left(\int\bigg(\int\abs{\e^{-1}\overline{\bll\chi(\e^{-1}\varphi)}}\ud\varphi\bigg)^2\ud\mu\right)^{\frac{1}{2}}\ls \left(\e^{-2}\int\bigg(\int\abs{\int_{\abs{\varphi}\leq\e}\bll\ud\varphi}\ud\varphi\bigg)^2\ud\mu\right)^{\frac{1}{2}}\\
        \ls&\left(\e^{-2}\int\bigg(\int\e\ue^{-K\eta}\ud\varphi\bigg)^2\ud\mu\right)^{\frac{1}{2}}\ls \left(\int\ue^{-2K\eta}\ud\mu\right)^{\frac{1}{2}}
        \ls\left(\e\int\ue^{-2K\eta}\ud\eta\right)^{\frac{1}{2}}\ls\e^{\frac{1}{2}}.\no
    \end{align}
\end{proof}

\section{Remainder Estimate}

\subsection{Basic Energy Estimate}

\begin{lemma}
    Under the assumption \eqref{assumption}, we have 
    \begin{align}\label{energy}
    \e^{-1}\tnms{\re}{\gamma_+}^2+\e^{-2}\tnm{\ire}^2\ls \oo\e^{-1}\tnm{\bre}^2+1.
    \end{align}
\end{lemma}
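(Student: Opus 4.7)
The plan is to test the weak formulation \eqref{weak formulation} against $\weak=\re$ itself. Lemma \ref{lem:green} with $f=g=\re$ gives $\iint\re(\vw\cdot\nx\re)=\tfrac{1}{2}\int_\gamma\re^2(\vw\cdot n)=\tfrac{1}{2}\big(\tnms{\re}{\gamma_+}^2-\tnms{\g}{\gamma_-}^2\big)$, while the identity $\int_{\s^2}(\re-\bre)\,\ud\vw=0$ implies $\iint(\re-\bre)\re=\tnm{\ire}^2$. Combining these yields the energy identity
\begin{align*}
\tfrac{1}{2}\tnms{\re}{\gamma_+}^2+\e^{-1}\tnm{\ire}^2=\tfrac{1}{2}\tnms{\g}{\gamma_-}^2+\bbr{\ss,\re}.
\end{align*}
Lemma \ref{h-estimate} controls the boundary contribution by $\e^2$, so the task reduces to bounding $|\bbr{\ss,\re}|$.

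I would decompose $\re=\bre+\ire$ and $\ss=\ss_0+\ss_1+\ss_2+\ss_3$, and treat the eight pairings separately. For each $\bbr{\ss_j,\ire}$, Cauchy--Schwarz together with the inequality $\tnm{\ss_j}\le\tnm{(1+\eta)\ss_j}$ and Lemmas \ref{s0-estimate}--\ref{s3-estimate} yield $|\bbr{\ss_j,\ire}|\ls\tnm{\ire}$, and Young's inequality with weight $\oo\e^{-1}$ absorbs this into $\e^{-1}\tnm{\ire}^2$ on the left at the cost of an $O(\e)$ error. For the easier $\bre$-pairings $\bbr{\ss_0,\bre}$ and $\bbr{\ss_2,\bre}$, the direct $L^2$ estimates $\tnm{\ss_0}\ls\e^2$ and $\tnm{\ss_2}\ls\e^{1/2}$ give $\ls\e^{1/2}\tnm{\bre}$, which Young's inequality with weight $\oo$ converts into $\oo\tnm{\bre}^2+O(\e)$.

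The delicate pairings are $\bbr{\ss_1,\bre}$ and $\bbr{\ss_3,\bre}$, because $\tnm{\ss_1}$ and $\tnm{\ss_3}$ are merely $O(1)$. For $\ss_3$ the remedy is the $L^2_xL^1_w$ bound of Lemma \ref{s3-estimate}: since $\bre$ depends only on $\vx$ one pulls it out of the $\vw$-integral, and $|\bbr{\ss_3,\bre}|\le\nm{\ss_3}_{L^2_xL^1_w}\nm{\bre}_{L^2_x}\ls\e^{1/2}\tnm{\bre}$. For $\ss_1\propto\cos\varphi\,\p\uu_0/\p\varphi$ I would integrate by parts in $\varphi$: the factor $\cos\varphi$ in $\ss_1$ combined with the Jacobian $\cos\varphi$ on $\s^2$ kills the boundary terms at $\varphi=\pm\tfrac{\pi}{2}$, and since $\bre$ is independent of $\varphi$ the derivative only falls on the prefactor $\cos^2\varphi$, producing an integrand of the schematic form $\uu_0\,\bre\,\sin\varphi\,(\ldots)$. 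This is controlled by $\tnm{\uu_0}\tnm{\bre}\ls\e^{1/2}\tnm{\bre}$ via \eqref{final 16}, and Young's inequality completes the job.

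Assembling the pieces gives $\tnms{\re}{\gamma_+}^2+\e^{-1}\tnm{\ire}^2\ls\e+\oo\tnm{\bre}^2$, and dividing by $\e$ yields \eqref{energy}. The main obstacle is $\bbr{\ss_1,\bre}$: the grazing-set singularity of $\p\uu_0/\p\varphi$ forbids any direct $L^2$ estimate, so the integration by parts in $\varphi$ exploiting the $\vw$-independence of $\bre$ is the decisive ingredient. Notice that this basic energy bound does \emph{not} yet achieve the $\e^{1/2}$ gain for $\tnm{\bre}$ advertised in the introduction; that improvement will come later from the conservation law \eqref{kk 01}.
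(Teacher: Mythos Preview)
Your proposal is correct and follows essentially the same route as the paper: test with $\re$, split $\re=\bre+(\ire)$, handle the $\ire$-pairings by Cauchy--Schwarz with the $L^2$ bounds of Lemmas \ref{s0-estimate}--\ref{s3-estimate}, and for the delicate $\bre$-pairings integrate by parts in $\varphi$ for $\ss_1$ (exactly as in \eqref{final 03}) and use the $L^2_xL^1_w$ bound for $\ss_3$. The only cosmetic differences are that the paper tests with $\e^{-1}\re$ rather than $\re$ and bounds $\bbr{\ss_2,\bre}$ via the $L^2_xL^1_w$ norm instead of the $L^2$ norm, but these lead to the same estimate.
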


\begin{proof}
Taking $\weak=\e^{-1}\re$ in \eqref{weak formulation}, we obtain
\begin{align}
    \frac{\e^{-1}}{2}\int_{\gamma}\abs{\re}^2(\vw\cdot\vn)+\e^{-2}\bbr{\re,\ire}=\e^{-1}\bbr{\re,\ss}.
\end{align}
Then using the orthogonality of $\bre$ and $\ire$, we have
\begin{align}
    \frac{\e^{-1}}{2}\tnms{\re}{\gamma_+}^2+\e^{-2}\tnm{\ire}^2=\e^{-1}\bbr{\re,\ss}+\frac{\e^{-1}}{2}\tnms{\g}{\gamma_-}^2.
\end{align}
Using Lemma \ref{h-estimate}, we know
\begin{align}\label{final 06}
    \e^{-1}\tnms{\re}{\gamma_+}^2+\e^{-2}\tnm{\ire}^2\ls\e+\e^{-1}\bbr{\re, \ss_0+\ss_1+\ss_2+\ss_3}.
\end{align}
Using Lemma \ref{s0-estimate}, we have
\begin{align}\label{final 01}
    \abs{\e^{-1}\bbr{\re, \ss_0}}\ls \e^{-1}\tnm{\re}\tnm{\ss_0}\ls\e\tnm{\re}\ls \oo\tnm{\re}^2+\e^2.
\end{align}
Using Lemma \ref{s1-estimate}, Lemma \ref{s2-estimate} and Lemma \ref{s3-estimate}, we have
\begin{align}\label{final 02}
    \abs{\e^{-1}\bbr{\ire, \ss_{1}+\ss_{2}+\ss_{3}}}
    \ls&\e^{-1}\tnm{\ire}\tnm{\ss_{1}+\ss_{2}+\ss_{3}}\\
    \ls&\e^{-1}\tnm{\ire}
    \ls \oo\e^{-2}\tnm{\ire}^2+1.\no
\end{align}
Finally, we turn to $\e^{-1}\bbr{\bre, \ss_1+\ss_2+\ss_3}$. For $\ss_1$, we integrate by parts with respect to $\varphi$ and use Lemma \ref{s1-estimate} to obtain
\begin{align}\label{final 03}
    \abs{\e^{-1}\bbr{\bre, \ss_1}}=&\e^{-1}\abs{\br{\bre, \bigg(\dfrac{\sin^2\psi}{R_1-\e\eta}+\dfrac{\cos^2\psi}{R_2-\e\eta}\bigg)\cos\varphi\dfrac{\p\uu_0}{\p\varphi}}}\\
    =&\e^{-1}\abs{\br{\bre, \bigg(\dfrac{\sin^2\psi}{R_1-\e\eta}+\dfrac{\cos^2\psi}{R_2-\e\eta}\bigg)\uu_0\sin\varphi}}\no\\
    \ls&\e^{-1}\nm{\bre}_{L^2}\nm{\uu_1}_{L^2_xL^1_w}\ls\e^{-\frac{1}{2}}\nm{\bre}_{L^2}\ls \oo\e^{-1}\tnm{\bre}^2+1.\no
\end{align}
Also, Lemma \ref{s2-estimate} and Lemma \ref{s3-estimate} yield
\begin{align}\label{final 04}
    \abs{\e^{-1}\bbr{\bre, \ss_2+\ss_3}}
    \ls&\e^{-1}\nm{\bre}_{L^2}\Big(\nm{\ss_2}_{L^2_xL^1_w}+\nm{\ss_3}_{L^2_xL^1_w}\Big)\ls\e^{-\frac{1}{2}}\nm{\bre}_{L^2}\ls \oo\e^{-1}\tnm{\bre}^2+1.
\end{align}
Collecting \eqref{final 01}\eqref{final 02}\eqref{final 03}\eqref{final 04}, we obtain
\begin{align}\label{final 05}
    \abs{\e^{-1}\bbr{\re, \ss_0+\ss_1+\ss_2+\ss_3}}\ls \oo\e^{-2}\tnm{\ire}^2+\oo\e^{-1}\tnm{\re}^2+1.
\end{align}
Combining \eqref{final 05} and \eqref{final 06}, we have \eqref{energy}.
\end{proof}

\subsection{Kernel Estimate}

\begin{lemma}
    Under the assumption \eqref{assumption}, we have
    \begin{align}\label{kernel}
    \tnm{\bre}^2\ls \tnm{\ire}^2+\tnms{\re}{\gamma_+}^2+\e.
    \end{align}
\end{lemma}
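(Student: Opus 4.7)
The plan is to pair \eqref{remainder} with a test function tailored to $\bre$. Let $\test(\vx)\in H^1_0(\Omega)\cap H^2(\Omega)$ solve the Dirichlet problem $-\dx\test=\bre$ with $\test|_{\p\Omega}=0$; elliptic regularity gives $\nm{\test}_{H^2}\ls\tnm{\bre}$, Hardy's inequality $\nm{\test/\mu}_{L^2_x}\ls\tnm{\bre}$ (with $\mu$ the normal distance to $\p\Omega$), and the trace theorem $\abs{\p\test/\p\vn}_{L^2(\p\Omega)}\ls\tnm{\bre}$. First I would substitute $\weak=\vw\cdot\nx\test$ into the weak formulation \eqref{weak formulation}. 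Splitting $\re=\bre+\ire$ in the bulk term $-\iint\re(\vw\cdot\nx)(\vw\cdot\nx\test)=-\iint\re\,w_iw_j\p_i\p_j\test$ and using $\tfrac{1}{4\pi}\int_{\s^2}w_iw_j\ud\vw=\tfrac{1}{3}\delta_{ij}$ together with $-\dx\test=\bre$, the $\bre$-piece yields the target $\tfrac{4\pi}{3}\tnm{\bre}^2$, while the $\ire$-piece $\iint\ire(w_iw_j-\tfrac{1}{3}\delta_{ij})\p_i\p_j\test$ is bounded by $\tnm{\ire}\tnm{\bre}$. Since $\test\equiv 0$ on $\p\Omega$ forces $\nx\test$ to be purely normal there, the boundary integral collapses to $\int_\gamma\re(\vw\cdot\vn)^2(\p\test/\p\vn)$, controlled via Cauchy--Schwarz, the trace bound, and Lemma \ref{h-estimate} by $(\tnms{\re}{\gamma_+}+\e)\tnm{\bre}$.

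The decisive structural step addresses the singular term $\e^{-1}\iint\ire(\vw\cdot\nx\test)$. Since $\bre\,\vw$ has zero $\vw$-average, this equals $\e^{-1}\iint\re(\vw\cdot\nx\test)$. The conservation law \eqref{kk 01}, obtained by testing \eqref{remainder} against the scalar $\test$ (legitimate because $\test$ is $\vw$-independent and vanishes on $\p\Omega$, so both the boundary contribution and $\e^{-1}\iint(\re-\bre)\test$ vanish), gives $-\iint\re(\vw\cdot\nx\test)=\iint\ss\test$. The dangerous term therefore becomes $-\e^{-1}\iint\ss\test$, and the assembled identity reads
\begin{align*}
\tfrac{4\pi}{3}\tnm{\bre}^2 = &\iint\ire\,(w_iw_j-\tfrac{1}{3}\delta_{ij})\,\p_i\p_j\test + \e^{-1}\iint\ss\test\\
&+\iint\ss\,(\vw\cdot\nx\test) - \int_\gamma\re\,(\vw\cdot\nx\test)(\vw\cdot\vn).
\end{align*}

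The main obstacle is extracting an $\e^{1/2}$ gain from $\e^{-1}\iint\ss\test$ (the naive bound $\tnm{\ss}\ls 1$ is too weak). For $\ss_0$, $\tnm{\ss_0}\ls\e^2$ gives $\e\tnm{\bre}$ immediately. For $\ss_j$ with $j\in\{1,2,3\}$, whose support sits in the boundary layer where $\mu=\e\eta$, Hardy yields
\begin{equation*}
\abs{\bbr{\ss_j,\test}} \ls \nm{\test/\mu}_{L^2_x}\,\nm{\mu\,\overline{\ss_j}}_{L^2_x}\ls \tnm{\bre}\cdot\e\,\nm{(1+\eta)\,\overline{\ss_j}}_{L^2_x}\ls \e^{3/2}\tnm{\bre},
\end{equation*}
where $\nm{(1+\eta)\overline{\ss_j}}_{L^2_x}\le C\nm{(1+\eta)\ss_j}_{L^2_xL^1_w}\ls\e^{1/2}$ comes directly from Lemmas \ref{s2-estimate}--\ref{s3-estimate} for $j=2,3$; for $j=1$ I first integrate by parts in $\varphi$ inside $\overline{\ss_1}$ (valid since $\test$ is $\vw$-independent and $\cos^2\varphi$ vanishes at $\varphi=\pm\pi/2$) to reduce $\abs{\overline{\ss_1}(\vx)}$ to $\nm{\abs{\sin\varphi}\uu_0}_{L^1_w}\le\nm{\uu_0}_{L^1_w}$, after which \eqref{final 16} supplies the bound. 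Hence $\e^{-1}\abs{\bbr{\ss_j,\test}}\ls\e^{1/2}\tnm{\bre}$. The companion term $\iint\ss(\vw\cdot\nx\test)$ is treated analogously: for $\ss_{2,3}$, pair the $L^2_xL^1_w$ bound with $\nx\test\in L^2_x$; for $\ss_1$, the estimate $\abs{\bbr{\ss_1,g}}\ls\e^{1/2}\tnm{\nabla_w g}$ from Lemma \ref{s1-estimate} applies with $g=\vw\cdot\nx\test$ since $\abs{\nabla_w g}\ls\abs{\nx\test}$. Collecting every contribution yields $\tnm{\bre}^2\ls(\tnm{\ire}+\tnms{\re}{\gamma_+}+\e^{1/2})\tnm{\bre}$, and dividing by $\tnm{\bre}$ then squaring gives the claim.
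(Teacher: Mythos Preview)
Your argument is correct and follows the same architecture as the paper's proof: the auxiliary Poisson problem for $\test$, the test function $\weak=w\cdot\nx\test$, and the cancellation of the dangerous $\e^{-1}\bbr{\ire,w\cdot\nx\test}$ via the conservation law \eqref{kk 01} obtained from $\weak=\test$ are identical to the paper's. The treatment of the source terms also matches in spirit: integration by parts in $\varphi$ to trade $\p_\varphi\uu_0$ for $\uu_0$ in $\ss_1$, and Hardy's inequality near $\p\Omega$ to convert the extra factor $\mu=\e\eta$ into the needed $\e^{1/2}$ gain for $\e^{-1}\bbr{\ss_j,\test}$.

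There is one mild stylistic difference worth noting. For the companion term $\bbr{\ss_1+\ss_2+\ss_3,w\cdot\nx\test}$ the paper expands $\nx\test=\nx\test\big|_{\mu=0}+\int_0^{\mu}\p_\mu(\nx\test)$ and then combines a trace estimate with a second use of Hardy; you instead bound $\bbr{\ss_{2,3},w\cdot\nx\test}$ directly via $\nm{\ss_j}_{L^2_xL^1_w}\nm{\nx\test}_{L^2_x}$ and invoke \eqref{final 11} for $\ss_1$. Your route is slightly shorter and avoids the second Hardy step, while the paper's decomposition makes the role of the boundary trace $\abs{\nx\test}_{L^2(\p\Omega)}$ explicit. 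Both yield the same $\e^{1/2}\tnm{\bre}$ bound, so this is a matter of taste rather than substance.
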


\begin{proof}
Denote $\test(x)$ satisfying
\begin{align}
\left\{
\begin{array}{l}
-\Delta_x\test=\bre\ \ \text{in}\
\ \Omega,\\\rule{0ex}{1.2em}
\test(\vx_0)=0\ \ \text{on}\ \
\p\Omega.
\end{array}
\right.
\end{align}
Based on standard elliptic estimates and trace estimates, we have
\begin{align}\label{kk 15}
    \nm{\test}_{H^2}+\abs{\test}_{H^{\frac{3}{2}}}\ls\tnm{\bre}.
\end{align}
Taking $\weak=\test$ in \eqref{weak formulation}, we have
\begin{align}
    \int_{\gamma}\re\test(\vw\cdot n)-\bbr{\re,w\cdot\nx\test}+\e^{-1}\bbr{\ire, \test}=\bbr{\ss, \test}.
\end{align}
Using oddness, orthogonality and $\test\big|_{\p\Omega}=0$, we obtain \eqref{kk 01}.

Then taking $\weak=w\cdot\nx\test$ in \eqref{weak formulation}, we obtain \eqref{kk 02}.

Adding  $\e^{-1}\times$\eqref{kk 01} and \eqref{kk 02} to eliminate $\e^{-1}\bbr{\ire, w\cdot\nx\test}$, we obtain
\begin{align}\label{kk 03}
    \int_{\gamma}\re\big(w\cdot\nx\test\big)(\vw\cdot n)-\bbr{\re,w\cdot\nx\big(w\cdot\nx\test\big)}
    =&\e^{-1}\bbr{\ss,\test}+\bbr{\ss, w\cdot\nx\test}.
\end{align}
Notice that
\begin{align}\label{kk 04}
    -\bbr{\re,w\cdot\nx\big(w\cdot\nx\test\big)}=&-\bbr{\bre,w\cdot\nx\big(w\cdot\nx\test\big)}-\bbr{\ire,w\cdot\nx\big(w\cdot\nx\test\big)},
\end{align}
where \eqref{kk 15} and Cauchy's inequality yield
\begin{align}
    -\bbr{\bre,w\cdot\nx\big(w\cdot\nx\test\big)}\simeq&\tnm{\bre}^2,\\
    \abs{\bbr{\ire,w\cdot\nx\big(w\cdot\nx\test\big)}}\ls&\tnm{\ire}^2+\oo\tnm{\bre}^2.
\end{align}
Also, using \eqref{kk 15} and Lemma \ref{h-estimate}, we have
\begin{align}\label{kk 06}
    \abs{\int_{\gamma}\re\big(w\cdot\nx\test\big)(\vw\cdot n)}\ls&\Big(\tnms{\re}{\gamma_+}+\tnms{\g}{\gamma_-}\Big)\abs{\nx\test}_{L^2}
    \ls\oo\tnm{\bre}^2+\tnms{\re}{\gamma_+}^2+\e^2.
\end{align}
Inserting \eqref{kk 04}--\eqref{kk 06} into \eqref{kk 03}, we obtain
\begin{align}\label{kk 07}
    \tnm{\bre}^2\ls&\e^2+ \tnm{\ire}^2+\tnms{\re}{\gamma_+}^2+\abs{\e^{-1}\bbr{\ss,\test}}+\abs{\bbr{\ss, w\cdot\nx\test}}.
\end{align}
Then we turn to the estimate of source terms in \eqref{kk 07}.
Cauchy's inequality and Lemma \ref{s0-estimate} yield
\begin{align}\label{kk 08}
    \abs{\e^{-1}\bbr{\ss_0, \test}}+\abs{\bbr{\ss_0, w\cdot\nx\test}}\ls\e^{-1}\tnm{\ss_0}\nm{\test}_{H^1}\ls\e\tnm{\bre}\ls \oo\tnm{\bre}^2+\e^2.
\end{align}
Similar to \eqref{final 03}, we first integrate by parts with respect to $\varphi$ in $\ss_1$. Using $\test\big|_{\p\Omega}=0$, \eqref{kk 15}, Hardy's inequality and Lemma \ref{s1-estimate}, Lemma \ref{s2-estimate}, Lemma \ref{s3-estimate}, we have
\begin{align}\label{kk 09}
    &\abs{\e^{-1}\bbr{\ss_1+\ss_2+\ss_3, \test}}
    \ls\abs{\e^{-1}\bbr{\uu_0+\ss_2+\ss_3,\int_0^{\mu}\frac{\p\test}{\p\mu}}}
    =\abs{\br{\eta\uu_0+\eta\ss_2+\eta\ss_3, \frac{1}{\mu}\int_0^{\mu}\frac{\p\test}{\p\mu}}}\\
    \ls&\nm{\eta\uu_0+\eta\ss_2+\eta\ss_3}_{L^2_xL^1_w}\tnm{\frac{1}{\mu}\int_0^{\mu}\frac{\p\test}{\p\mu}}
    \ls\nm{\eta\uu_0+\eta\ss_2+\eta\ss_3}_{L^2_xL^1_w}\tnm{\frac{\p\test}{\p\mu}} \ls\e^{\frac{1}{2}}\nm{\test}_{H^1}\no\\
    \ls& \e^{\frac{1}{2}}\tnm{\bre}
    \ls\oo\tnm{\bre}^2+\e.\no
\end{align}
Analogously, we integrate by parts with respect to $\varphi$ in $\ss_1$. Then using \eqref{kk 15}, fundamental theorem of calculus, Hardy's inequality and Lemma \ref{s1-estimate}, Lemma \ref{s2-estimate}, Lemma \ref{s3-estimate}, we bound
\begin{align}\label{kk 10}
    &\abs{\bbr{\ss_1+\ss_2+\ss_3, w\cdot\nx\test}}\ls\abs{\br{\uu_0+\ss_2+\ss_3, \nx\test\Big|_{\mu=0}+\int_0^{\mu}\frac{\p\big(\nx\test\big)}{\p\mu}}} \\
    \ls&\abs{\br{\uu_0+\ss_2+\ss_3, \nx\test\Big|_{\mu=0}}}+\abs{\e\br{\eta\uu_0+\eta\ss_2+\eta\ss_3, \frac{1}{\mu}\int_0^{\mu}\frac{\p\big(\nx\test\big)}{\p\mu}}}\no\\
    \ls&\nm{\uu_0+\ss_2+\ss_3}_{L^2_xL^1_w}\abs{\nx\test}_{L^2}+\e\tnm{\eta\uu_0+\eta\ss_2+\eta\ss_3}\tnm{\frac{\p\big(\nx\test\big)}{\p\mu}}\no\\
    \ls&\e^{\frac{1}{2}}\tnms{\nx\test}{\p\Omega}+\e\nm{\test}_{H^2}\ls\e^{\frac{1}{2}}\tnm{\bre}
    \ls\oo\tnm{\bre}^2+\e.\no
\end{align}
Hence, inserting \eqref{kk 08}, \eqref{kk 09} and \eqref{kk 10} into \eqref{kk 07}, we have shown \eqref{kernel}.
\end{proof}

\subsection{Synthesis}

\begin{proposition}\label{prop:energy}
    Under the assumption \eqref{assumption}, we have
    \begin{align}
    \e^{-\frac{1}{2}}\tnms{\re}{\gamma_+}+\e^{-\frac{1}{2}}\tnm{\bre}+\e^{-1}\tnm{\ire}\ls1.
    \end{align}
\end{proposition}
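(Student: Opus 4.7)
The proposition is a clean synthesis of the two lemmas just proved: the basic energy estimate \eqref{energy} and the kernel estimate \eqref{kernel}. The plan is to combine them so that the $\oo$-small constants on the right-hand sides allow absorption into the left-hand side, yielding a clean bound on each of $\tnm{\bre}$, $\tnm{\ire}$, and $\tnms{\re}{\gamma_+}$ of the claimed order.

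First I would rescale the kernel estimate by multiplying \eqref{kernel} by $\e^{-1}$ to obtain
\begin{align}
\e^{-1}\tnm{\bre}^2\ls \e^{-1}\tnm{\ire}^2+\e^{-1}\tnms{\re}{\gamma_+}^2+1.
\end{align}
The point is that the right-hand side now features exactly the quantities controlled by \eqref{energy}. Into the first term I substitute $\e^{-1}\tnm{\ire}^2=\e\cdot\e^{-2}\tnm{\ire}^2\ls \oo\tnm{\bre}^2+\e$, and into the second term I directly use $\e^{-1}\tnms{\re}{\gamma_+}^2\ls \oo\e^{-1}\tnm{\bre}^2+1$. Both resulting $\tnm{\bre}^2$ terms carry an arbitrarily small prefactor $\oo$, and since for $0<\e\ll 1$ we have $\oo\tnm{\bre}^2\leq \oo\e^{-1}\tnm{\bre}^2$, both can be absorbed into the left-hand side, yielding $\e^{-1}\tnm{\bre}^2\ls 1$, i.e.\ $\tnm{\bre}\ls \e^{1/2}$.

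Once $\tnm{\bre}\ls\e^{1/2}$ is in hand, I would feed it back into \eqref{energy}: the right-hand side becomes $\oo\e^{-1}\cdot\e+1\ls 1$, giving immediately $\e^{-1}\tnms{\re}{\gamma_+}^2\ls 1$ and $\e^{-2}\tnm{\ire}^2\ls 1$. Taking square roots and assembling the three estimates produces exactly
\begin{align}
\e^{-\frac{1}{2}}\tnms{\re}{\gamma_+}+\e^{-\frac{1}{2}}\tnm{\bre}+\e^{-1}\tnm{\ire}\ls1.
\end{align}

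There is essentially no obstacle at this stage, because the hard work has already been done in the two preceding lemmas; in particular the crucial $\e^{1/2}$ gain in the kernel estimate (which came from the cancellation \eqref{kk 01} and the testing trick \eqref{kk 02}) is exactly what allows the $\e^{-1}$ rescaling above to close. The only thing one needs to verify carefully is that the prefactors $\oo$ on the absorbed terms are genuinely independent of $\e$ (so that they can be chosen once and for all to be smaller than the implicit universal constant), which is clear from the statements of \eqref{energy} and \eqref{kernel}. No a priori assumption on the existence or regularity of $\re$ is used beyond what is needed to justify the weak formulation \eqref{weak formulation} that both lemmas rely on.
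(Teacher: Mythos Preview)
Your proposal is correct and follows essentially the same approach as the paper: combine the energy estimate \eqref{energy} with the kernel estimate \eqref{kernel} and absorb the $\oo$-terms. The only cosmetic difference is the order of substitution---the paper inserts \eqref{kernel} into \eqref{energy} to first obtain \eqref{kk 16}, then plugs back into \eqref{kernel} for $\tnm{\bre}^2\ls\e$, whereas you first rescale \eqref{kernel} and substitute \eqref{energy} to get $\tnm{\bre}$; both routes close identically.
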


\begin{proof}
From \eqref{energy}, we have
\begin{align}\label{kk 11}
    \e^{-1}\tnms{\re}{\gamma_+}^2+\e^{-2}\tnm{\ire}^2\ls \oo\e^{-1}\tnm{\bre}^2+1.
\end{align}
From \eqref{kernel}, we have
\begin{align}\label{kk 12}
    \tnm{\bre}^2\ls \tnm{\ire}^2+\tnms{\re}{\gamma_+}^2+\e.
\end{align}
Inserting \eqref{kk 12} into \eqref{kk 11}, we have
\begin{align}\label{kk 16}
    \e^{-1}\tnms{\re}{\gamma_+}^2+\e^{-2}\tnm{\ire}^2\ls 1.
\end{align}
Inserting \eqref{kk 16} into \eqref{kk 12}, we have
\begin{align}\label{kk 17}
    \tnm{\bre}^2\ls\e.
\end{align}
Hence, adding $\e^{-1}\times$\eqref{kk 17} and \eqref{kk 16}, we have 
\begin{align}
    \e^{-1}\tnms{\re}{\gamma_+}^2+\e^{-1}\tnm{\bre}^2+\e^{-2}\tnm{\ire}^2\ls1.
\end{align}
Then our result follows.
\end{proof}

\section{Proof of Main Theorem}

The well-posedness of \eqref{transport} is well-known \cite{Bensoussan.Lions.Papanicolaou1979, Bardos.Santos.Sentis1984, AA003}. The construction of $\u_0$, $\bl$ and $\bl_{\infty}$ follows from Proposition \ref{prop:boundary-wellposedness} and Proposition \ref{prop:wellposedness}, so we focus on the derivation of \eqref{main}.
    
Based on Proposition \ref{prop:energy} and \eqref{expand}, we have
\begin{align}\label{rr 04}
    \tnm{u^{\e}-\u_0-\e\u_1-\e^2\u_2-\uu_0}\ls\e^{\frac{1}{2}}.
\end{align}
Using Proposition \ref{prop:wellposedness}, we have
\begin{align}\label{rr 05}
    \tnm{\e\u_1+\e^2\u_2}\ls \e.
\end{align}
Using Proposition \ref{prop:wellposedness} and the rescaling $\eta=\e^{-1}\mn$, we have
\begin{align}\label{rr 06}
    \tnm{\uu_0}\ls\e^{\frac{1}{2}}.
\end{align}
Then \eqref{main} follows from inserting \eqref{rr 05}\eqref{rr 06} into \eqref{rr 04}.

\bibliographystyle{siam}
\bibliography{Reference}

\begin{thebibliography}{10}

\bibitem{Bardos.Golse.Perthame1987}
{\sc C.~Bardos, F.~Golse, and B.~Perthame}, {\em The {Rosseland} approximation
  for the radiative transfer equations}, Comm. Pure Appl. Math., 40 (1987),
  pp.~69--721.

\bibitem{Bardos.Golse.Perthame.Sentis1988}
{\sc C.~Bardos, F.~Golse, B.~Perthame, and R.~Sentis}, {\em The nonaccretive
  radiative transfer equations: existence of solutions and {Rosseland}
  approximation}, J. Funct. Anal., 77 (1988), pp.~434--460.

\bibitem{Bardos.Phung2017}
{\sc C.~Bardos and K.~D. Phung}, {\em Observation estimate for kinetic
  transport equations by diffusion approximation}, C. R. Math. Acad. Sci.
  Paris, 355 (2017), pp.~640--664.

\bibitem{Bardos.Santos.Sentis1984}
{\sc C.~Bardos, R.~Santos, and R.~Sentis}, {\em Diffusion approximation and
  computation of the critical size}, Trans. Amer. Math. Soc., 284 (1984),
  pp.~617--649.

\bibitem{Bensoussan.Lions.Papanicolaou1979}
{\sc A.~Bensoussan, J.-L. Lions, and G.~C. Papanicolaou}, {\em Boundary layers
  and homogenization of transport processes}, Publ. Res. Inst. Math. Sci., 15
  (1979), pp.~53--157.

\bibitem{Esposito.Guo.Kim.Marra2013}
{\sc R.~Esposito, Y.~Guo, C.~Kim, and R.~Marra}, {\em Non-isothermal boundary
  in the {Boltzmann} theory and {Fourier} law}, Comm. Math. Phys., 323 (2013),
  pp.~177--239.

\bibitem{AA007}
{\sc Y.~Guo and L.~Wu}, {\em Geometric correction in diffusive limit of neutron
  transport equation in {2D} convex domains}, Arch. Rational Mech. Anal., 226
  (2017), pp.~321--403.

\bibitem{AA009}
\leavevmode\vrule height 2pt depth -1.6pt width 23pt, {\em Regularity of
  {Milne} problem with geometric correction in {3D}}, Math. Models Methods
  Appl. Sci., 27 (2017), pp.~453--524.

\bibitem{Krylov2008}
{\sc N.~V. Krylov}, {\em Lectures on elliptic and parabolic equations in
  {Sobolev} spaces.}, American Mathematical Society, Providence, RI, 2008.

\bibitem{Larsen1974=}
{\sc E.~W. Larsen}, {\em A functional-analytic approach to the steady,
  one-speed neutron transport equation with anisotropic scattering}, Comm. Pure
  Appl. Math., 27 (1974), pp.~523--545.

\bibitem{Larsen1974}
\leavevmode\vrule height 2pt depth -1.6pt width 23pt, {\em Solutions of the
  steady, one-speed neutron transport equation for small mean free paths}, J.
  Mathematical Phys., 15 (1974), pp.~299--305.

\bibitem{Larsen1975}
\leavevmode\vrule height 2pt depth -1.6pt width 23pt, {\em Neutron transport
  and diffusion in inhomogeneous media {I}}, J. Mathematical Phys., 16 (1975),
  pp.~1421--1427.

\bibitem{Larsen1977}
\leavevmode\vrule height 2pt depth -1.6pt width 23pt, {\em Asymptotic theory of
  the linear transport equation for small mean free paths {II}}, SIAM J. Appl.
  Math., 33 (1977), pp.~427--445.

\bibitem{Larsen.D'Arruda1976}
{\sc E.~W. Larsen and J.~D'Arruda}, {\em Asymptotic theory of the linear
  transport equation for small mean free paths {I}}, Phys. Rev., 13 (1976),
  pp.~1933--1939.

\bibitem{Larsen.Habetler1973}
{\sc E.~W. Larsen and G.~J. Habetler}, {\em A functional-analytic derivation of
  {Case}'s full and half-range formulas}, Comm. Pure Appl. Math., 26 (1973),
  pp.~525--537.

\bibitem{Larsen.Keller1974}
{\sc E.~W. Larsen and J.~B. Keller}, {\em Asymptotic solution of neutron
  transport problems for small mean free paths}, J. Mathematical Phys., 15
  (1974), pp.~75--81.

\bibitem{Larsen.Zweifel1974}
{\sc E.~W. Larsen and P.~F. Zweifel}, {\em On the spectrum of the linear
  transport operator}, J. Mathematical Phys., 15 (1974), pp.~1987--1997.

\bibitem{Larsen.Zweifel1976}
\leavevmode\vrule height 2pt depth -1.6pt width 23pt, {\em Steady,
  one-dimensional multigroup neutron transport with anisotropic scattering}, J.
  Mathematical Phys., 17 (1976), pp.~1812--1820.

\bibitem{Li.Lu.Sun2015}
{\sc Q.~Li, J.~Lu, and W.~Sun}, {\em Diffusion approximations and domain
  decomposition method of linear transport equations: asymptotics and
  numerics}, J. Comput. Phys., 292 (2015), pp.~141--167.

\bibitem{Li.Lu.Sun2015==}
\leavevmode\vrule height 2pt depth -1.6pt width 23pt, {\em Half-space kinetic
  equations with general boundary conditions}, Math. Comp., 86 (2017),
  pp.~1269--1301.

\bibitem{Li.Lu.Sun2017}
\leavevmode\vrule height 2pt depth -1.6pt width 23pt, {\em Validity and
  regularization of classical half-space equations}, J. Stat. Phys., 166
  (2017), pp.~398--433.

\bibitem{AA014}
{\sc L.~Wu}, {\em Boundary layer of transport equation with in-flow boundary},
  Arch. Rational Mech. Anal., 235 (2020), pp.~2085--2169.

\bibitem{AA016}
\leavevmode\vrule height 2pt depth -1.6pt width 23pt, {\em Diffusive limit of
  transport equation in {3D} convex domains}, Peking Math. J., 4 (2021),
  pp.~203--284.

\bibitem{AA003}
{\sc L.~Wu and Y.~Guo}, {\em Geometric correction for diffusive expansion of
  steady neutron transport equation}, Comm. Math. Phys., 336 (2015),
  pp.~1473--1553.

\bibitem{AA006}
{\sc L.~Wu, X.~Yang, and Y.~Guo}, {\em Asymptotic analysis of transport
  equation in annulus}, J. Stat. Phys., 165 (2016), pp.~585--644.

\end{thebibliography}

\end{document}